\newtheorem{theorem}{Theorem}[section]
\newtheorem{proposition}[theorem]{Proposition}
\newtheorem{lemma}[theorem]{Lemma}
\newtheorem{corollary}[theorem]{Corollary}
\newtheorem{definition}[theorem]{Definition}
\numberwithin{equation}{section}
\numberwithin{theorem}{section}
\newcommand{\Vol}{\mathop{\mathrm{Vol}}}	
\newcommand{\diff}{\mathbin{\Delta}}		
\title{Existence and uniqueness of the stationary measure in the continuous 
Abelian sandpile}
\email\url|{wkager,hliu,rmeester}@few.vu.nl|
\author{Wouter Kager\footnote{VU University Amsterdam,
De Boelelaan 1081a, 1081\,HV Amsterdam, The Netherlands, e-mail: \email}
\and Haiyan Liu$^\ast$ \and Ronald Meester$^\ast$}
\begin{document}

\maketitle

\begin{abstract}
Let $\Lambda \subset \mathbb{Z}^d$ be finite. We study the following sandpile model on~$\Lambda$. The height
at any given vertex $x \in \Lambda$ is a positive real number, and additions are uniformly distributed
on some interval $[a,b] \subset [0,1]$. The threshold value is~1; when the height at a given vertex exceeds~1, it topples, that
is, its height is reduced by~1, and the heights of all its neighbours in 
$\Lambda$ increase by $1/2d$. We first
establish that the uniform measure~$\mu$ on the so called `allowed 
configurations' is invariant under the dynamics. When $a < b$, we show with 
coupling ideas that starting from any initial configuration of heights, the 
process converges
in distribution to~$\mu$, which therefore is the unique invariant measure for 
the process.  When
$a=b$, that is, when the addition amount is non-random, and $a \notin 
\mathbb{Q}$, it is still the case that $\mu$ is the unique invariant 
probability
measure, but in this case we use random ergodic theory to prove this; this proof proceeds in a very different way. Indeed, the coupling approach cannot work in this case since we also show the somewhat surprising fact that when $a=b\notin \mathbb{Q}$, the process does not converge in distribution at all
starting
from any initial configuration.
\end{abstract}

\section{Introduction}

The classical {\em BTW-Sandpile Model} was introduced by Bak, Tang and Wiesenfeld
in 1988 as a prototype model for what they called `self-organized
criticality'~\cite{bak}. It has attracted attention from both physicists and mathematicians. In Dhar's paper~\cite{dhar}, the model was
coined the {\em Abelian Sandpile Model} (ASM) because of the Abelian
structure of the toppling operators. The BTW-sandpile model
is defined on a finite subset~$\Lambda$ of the $d$-dimensional integer lattice. At
each site of~$\Lambda$, we have a non-negative integer value that
denotes the {\em height} or the {\em number of `grains'} at that site. At each
discrete time~$t$, one grain is added to a random site in~$\Lambda$;
the positions of the additions are independent of each other. When a site has at least $2d$ grains, it is defined {\em unstable} and {\em topples}, that is,
it loses $2d$ grains, and each of its nearest neighbours in~$\Lambda$ receives 
one grain. Neighbouring sites that have received a grain, can
now become unstable themselves, and also topple. This is continued until all heights are at most $2d-1$ again. This will certainly happen, since
grains at the boundary of the system are lost. See
\cite{dhar, meester, Redig} for many detailed results about this model.

As a variant of the BTW model, the {\em Zhang sandpile model} was
introduced by Zhang~\cite{zhang} in 1989.
The Zhang sandpile model has some of the flavour of the BTW sandpile, albeit with some important and crucial differences: 
(i) in the Zhang-model, the height variables are continuous with values in $[0, \infty)$ and the threshold
is (somewhat arbitrarily) set to~1 (we also speak of {\em mass} rather than of height sometimes); (ii) at each
discrete time, a random amount of mass, which is uniformly
distributed on an interval $[a, b]\subset[0, 1)$, is added
to a randomly chosen site; (iii) when a site has height larger than~1, it is defined unstable and topples. In this model
this means that it loses {\em all}
its mass and each of its nearest neighbours in~$\Lambda$ receives a
$1/2d$ proportion of this mass. 

Since the mass
redistributed during a toppling depends on the current
configuration, the topplings in Zhang's sandpile are not abelian. From simulations in~\cite{zhang}, it is shown
that in large volume under stationary measure, the height variables of this model concentrate on several discrete `quasi-values', and in this sense, this model behaves similarly to the BTW-sandpile model. Recently, this model as well
as the infinite volume version of this model have been rigorously
studied in~\cite{anne, anne2}.

In the present paper, we discuss a model introduced in~\cite{azimi}, which is continuous like Zhang's sandpile, but
abelian like the BTW sandpile. The model is related to the deterministic model introduced in~\cite{gab}. In our model, the threshold value of all sites is again set to~1. 
The only difference with Zhang's model, is that when a site topples, it does not lose all its mass, but rather only a total of mass~1 instead. Each neighbour
in~$\Lambda$ then receives mass $1/2d$. Note that this is more similar to the BTW sandpile than Zhang's sandpile: in the BTW model the number of grains involved in a toppling does not depend on the actual height of the vertex itself either. In this sense, the newly defined model is perhaps
a more natural analogue of the BTW model than Zhang's model. In this paper we therefore call the new model the CBTW sandpile, where the
`C' stands for `continuous'.

In this paper, we study existence and uniqueness of the invariant probability measure for the CBTW sandpile. 
In the next section we formally define the model, set our notation and state our results. After that we 
study the relation between the CBTW and the BTW-sandpile model, used mainly
as a tool in the subsequent proofs. 

\section{Definitions, notation and main results}

Let $\Lambda$ be a finite subset of $\mathbb{Z}^d$ and let $\eta=(\eta(x), x \in \Lambda)$ be a configuration of heights, taking values in $\mathcal{X}=[0, \infty)^{\Lambda}$. Site~$x$ is called {\em stable} (in~$\eta$) if
$0 \leq \eta(x)<1$; if $\eta(x) \geq 1$, site~$x$ is called {\em unstable}. A configuration~$\eta$ is called stable
if every site in~$\Lambda$ is stable.
We define $\Omega=[0, 1)^\Lambda$ to be the collection of stable configurations.

By $T_x$, we denote the {\em toppling operator} associated to site~$x$: if $\Delta$ is the $|\Lambda|\times|\Lambda|$ matrix 
$$
\Delta(x, y)= \left\{
\begin{array}{ll}
1 & \mbox{ if } x=y,\\
- \frac{1}{2d} & \mbox{ if } |y-x|=1,\\
0 & \mbox{ otherwise},
\end{array}
\right.
$$
then
$$
T_x\eta: =\eta-\Delta(x, \cdot).
$$
Therefore, $T_x\eta$ is the configuration obtained from~$\eta$ by performing one toppling at site~$x$. 
The operation of~$T_x$ on~$\eta$ is said to be {\em legal} if $\eta(x)\geq1$,
otherwise the operation is said to be {\em illegal}.
It is easy to see that toppling operators commute, that is, we have

\begin{equation}
T_xT_y\eta=T_yT_x\eta.
\end{equation}

This abelian property of the topplings implies a number of useful properties, the proofs of which are similar to the
proof of Theorem~2.1 in~\cite{meester} and are therefore not repeated here.

\begin{proposition}
Let $\eta\in\mathcal{X}$ and suppose we start the sandpile dynamics with initial configuration~$\eta$. Then we have that
\begin{enumerate}
\item the system evolves to a stable configuration after finitely
many legal topplings; 
\item the final configuration is the
same for all orders of legal topplings; 
\item for each $x\in\Lambda$, the number of legal topplings at site~$x$
is the same for all sequences of legal topplings that result in a
stable configuration.
\end{enumerate}
\label{proposition1}
\end{proposition}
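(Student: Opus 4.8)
The plan is to deduce all three assertions from the abelian property $T_xT_y\eta=T_yT_x\eta$ together with the fact that mass leaks out of $\Lambda$ at the boundary. I would prove (1) first and on its own, since termination is the one statement that genuinely uses the geometry of $\Lambda$ rather than commutativity. Suppose for contradiction that some legal toppling sequence never stabilises. Since $\Lambda$ is finite, the set $S$ of sites that topple infinitely often along this sequence is nonempty, and every site outside $S$ topples only finitely many times. I would track the total mass $M_S=\sum_{x\in S}\eta(x)$ carried by $S$. A toppling at $x\in S$ removes mass $1$ at $x$ and returns $\tfrac{1}{2d}$ to each neighbour of $x$ lying in $\Lambda$, so the mass retained by $S$ equals $\tfrac{1}{2d}$ times the number of neighbours of $x$ inside $S$. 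If $x$ has any neighbour in $\mathbb{Z}^d\setminus S$ (one lying in $\Lambda\setminus S$, or one outside $\Lambda$ where the mass is simply lost), this number is at most $2d-1$, so at least $\tfrac{1}{2d}$ leaks out of $S$ at each such toppling. The mass entering $S$ is bounded by the finitely many topplings performed outside $S$, and $M_S\ge 0$ throughout, so infinitely many leaks are impossible. Hence every $x\in S$ has all $2d$ of its lattice neighbours inside $S$; but a nonempty finite subset of $\mathbb{Z}^d$ cannot be closed under adjacency (take a vertex extremal in some coordinate), a contradiction. This proves (1).

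For (3) I would establish the stronger fact that any two legal toppling sequences for $\eta$ that both terminate in a stable configuration are permutations of one another; the equality of the per-site toppling numbers is then immediate. The argument is an induction on the length of one sequence, using two observations. First, if $\eta(y)\ge 1$ then $y$ is toppled at least once in every complete legal sequence: otherwise the only operations affecting $y$ are topplings of its neighbours, which can only raise the height at $y$, so $y$ would remain unstable in the final, supposedly stable, configuration. Second, given a complete legal sequence $\alpha=(x_1,\dots,x_m)$ and a site $y$ with $\eta(y)\ge 1$, I would move the first occurrence of $y$ in $\alpha$ to the front. The reordered sequence is still legal: at each step the current configuration differs from the corresponding one in $\alpha$ only by the effect of one extra toppling at $y$, and since the off-diagonal entries of $\Delta$ are non-positive, toppling $y$ never lowers the height at any other site, so no previously legal toppling becomes illegal. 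By the abelian property the reordering yields the same final configuration. Cancelling the common leading toppling and invoking the induction hypothesis gives the permutation claim, which is precisely (3).

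Finally, (2) follows at once from (3): writing $n_x$ for the (now order-independent) number of topplings at $x$, the definition $T_x\eta=\eta-\Delta(x,\cdot)$ expresses the final configuration as $\eta-\sum_{x}n_x\,\Delta(x,\cdot)$, which depends only on the counts $(n_x)$ and is therefore the same for every complete legal sequence. I expect the main obstacle to be the termination step (1), as it is the only place where boundary dissipation and the finiteness of $\Lambda$ must be exploited rather than the purely algebraic commutation relation; the delicate point within (3) is verifying that the exchange of topplings preserves legality, which rests entirely on the sign of the off-diagonal entries of $\Delta$.
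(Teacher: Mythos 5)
Your proof is correct and is essentially the standard argument that the paper itself defers to (it gives no proof, citing instead the analogous Theorem~2.1 of Meester--Redig--Znamenski): termination via mass dissipation through the boundary from the set of infinitely-toppling sites, and the permutation claim via the exchange lemma, whose legality-preservation step rests, as you note, on the non-positivity of the off-diagonal entries of $\Delta$. No gaps.
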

 
It follows that  for $\eta\in\mathcal{X}$, there is a unique stable configuration $\eta'\in\Omega$ reachable from~$\eta$ by 
a series of legal topplings. We define the {\em stabilization operator}~$\mathcal{S}$ as a map from~$\mathcal{X}$ to~$\Omega$  such that
\begin{equation}
\mathcal{S}\eta: =\eta'.
\end{equation}

For $x\in\Lambda, u\in[0, 1)$, $A_x^u$ denotes the {\em addition operator} defined by
\begin{equation}
A_x^u\eta=\mathcal{S}(\eta+u\delta_x),
\end{equation}
where 
$$
\delta_x(y)=\left\{
\begin{array}{ll}
0 & \mbox{ if } y\neq x,\\
1& \mbox{ if } y=x,
\end{array}
\right.
$$
that is, we add mass~$u$ at site~$x$ and subsequentially stabilize.
According to Proposition~\ref{proposition1}, $A_x^u$ is well-defined, and satisfies
$$
A_x^u A_y^v\eta= A_y^v A_x^u\eta.
$$

The evolution of the CBTW model can now concisely be described via 
\begin{equation}
\eta_t=A_{X_t}^{U_t}\eta_{t-1}, \quad t=1,2,3,\cdots,
\label{dynamic}
\end{equation} 
where $\eta_0$ is the initial configuration, $X_1, X_2,\ldots$ is a sequence of i.i.d.\ uniformly distributed random variables on~$\Lambda$ and $U_1, U_2, \ldots$ are i.i.d.\ and uniformly distributed on $[a,b]$, where $[a,b] \subset [0,1]$. The two sequences are also independent of each other. 

For $\eta\in\Omega$ and $W \subseteq \Lambda$, $\eta|_W$ denotes the restriction of~$\eta$ to~$W$.
For every finite subset~$W$ of~$\Lambda$, $\eta|_W$ is called a {\em forbidden sub-configuration} (FSC) if for all $x\in W$
$$
\eta(x)< \frac{1}{2d} \#(\textrm{nearest neighbours of } x \textrm{ in } W),
$$
where $\#$ denotes cardinality.
A configuration $\eta\in\Omega$ is said to be {\em allowed} if it has no FSC's. This notion is parallel to the corresponding notion
in the BTW-sandpile model---see also below. It is well known that in the BTW model, the uniform measure on all allowed configurations is invariant under
the dynamics (and is the only such probability measure). 
Let us therefore denote by~$\mathcal{R}$ the set of all allowed configurations and by~$\mu$ normalized Lebesgue measure on~$\mathcal{R}$, that is, 
the measure which assigns mass such that for every measurable $B\subset\mathcal{R}$ we have
\begin{equation}
	\mu(B) =\frac{\Vol(B)}{\Vol(\mathcal{R})},
\end{equation}
where $\Vol(\cdot)$ denotes Lebesgue measure.

For every initial probability measure~$\nu$, $\nu_t^{a, b}$ denotes the distribution of the process  at time~$t$. The first result (similar
to the corresponding result in the deterministic model in~\cite{gab}) states 
not only that $\mu$ is invariant for the process, but moreover that $\mu$ is 
in fact invariant for each individual transformation~$A_x^u$:

\begin{theorem}
For every $x\in\Lambda, u\in[0, 1)$, $A_x^u$ is a bijection on~$\mathcal{R}$ 
and $\mu$ is $A_x^u$-invariant. Hence, $\mu$ is invariant for the 
CBTW-sandpile model.
\label{invariant}
\end{theorem}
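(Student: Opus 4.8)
The plan is to establish three facts in sequence: (1) that each $A_x^u$ maps $\mathcal{R}$ into $\mathcal{R}$, (2) that $A_x^u$ is a bijection on $\mathcal{R}$, and (3) that the measure $\mu$ is preserved. The final statement that $\mu$ is invariant for the whole CBTW dynamics then follows immediately, since each step of the process applies some $A_{X_t}^{U_t}$, and if every such operator preserves $\mu$, then so does the mixture over the (independent) random choices of site and addition amount.

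For step (1), I would argue that the property of being allowed is preserved under addition and stabilization. Here I expect to lean heavily on the promised connection between the CBTW model and the BTW model developed in the next section of the paper: the notion of FSC for CBTW is deliberately parallel to that of the BTW model, and in the BTW setting it is classical that allowed (recurrent) configurations map to allowed configurations under addition followed by stabilization. The cleanest route is probably to show that the combinatorial structure of which topplings occur during stabilization of $\eta + u\delta_x$ is governed entirely by integer-threshold crossings in a way that mirrors BTW, so that the ``allowed'' predicate transfers. I would phrase this as: $A_x^u\eta \in \mathcal{R}$ whenever $\eta \in \mathcal{R}$.

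For step (2), bijectivity, the key idea is to exhibit an inverse. Because additions at different sites commute, and because each $A_x^u$ merely adds a fixed amount $u$ and then performs a deterministic integer number of topplings, I would look for an addition operator (or composition thereof) that undoes $A_x^u$ on the finite set $\mathcal{R}$. The natural candidate uses the abelian/group structure: on the finite-volume model, the set of allowed configurations carries a group action, and adding the ``complementary'' amount returns to the start. Concretely, I would show $A_x^u$ is injective on $\mathcal{R}$ (two allowed configurations with the same image must coincide, again by reducing to the BTW toppling bookkeeping), and since $A_x^u$ maps the set $\mathcal{R}$ into itself, injectivity on a space of finite measure together with measure-preservation (step 3) forces surjectivity; alternatively surjectivity follows directly from producing an explicit inverse operator.

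For step (3), measure-preservation, the cleanest approach is \emph{piecewise translation}. The operator $A_x^u$ acts as follows: add the constant vector $u\delta_x$, then subtract an integer combination $\sum_y n_y\,\Delta(y,\cdot)$ of rows of the toppling matrix, where the toppling numbers $n_y$ are locally constant in $\eta$ (they change only across the measure-zero boundaries where some coordinate hits an integer threshold). On each region where the $n_y$ are constant, $A_x^u$ is a \emph{translation} of $\mathbb{R}^\Lambda$, hence volume-preserving. Partitioning $\mathcal{R}$ into these finitely (or countably) many pieces and summing shows $\Vol(A_x^u B) = \Vol(B)$ for measurable $B$, so $\mu$ is $A_x^u$-invariant. \textbf{The main obstacle} I anticipate is step (1)/(2): verifying rigorously that the allowed-configuration property is preserved and that the map is a bijection. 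This is where the parallel with BTW must be made precise rather than merely asserted, and it requires the toppling-bookkeeping lemmas (the correspondence between CBTW topplings and BTW topplings) that the paper defers to the following section; the measure-preservation in step (3), by contrast, is essentially a clean observation once the piecewise-translation structure is recognized.
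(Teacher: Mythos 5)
Your plan follows essentially the same route as the paper: closure of $\mathcal{R}$ under $A_x^u$ via the BTW correspondence (the paper proves this as $\mathcal{R}=\mathcal{R}'$, i.e.\ allowed $=$ reachable, and then uses abelianness), bijectivity via an explicit inverse, and measure preservation by recognizing $A_x^u$ as a piecewise translation. Your step (3) is exactly what the paper's computation on the sets $C(\xi,[c,d))$ amounts to: the pieces are indexed by the finitely many integer parts $\xi\in\mathcal{R}^o$ together with the two cases according to whether the fractional part at $x$ does or does not wrap past a multiple of $1/(2d)$ (note the relevant thresholds are multiples of $1/(2d)$, not integers), and on each piece the map translates the fractional coordinate and permutes the integer parts via $a_x^{\lfloor 2du\rfloor}$ or $a_x^{\lfloor 2du\rfloor+1}$, each of which is a bijection of $\mathcal{R}^o$.

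The one step that would fail if taken literally is your first option in step (2): injectivity plus measure preservation on a finite-measure space only yields surjectivity up to a null set, not genuine surjectivity, so it cannot establish that $A_x^u$ is a bijection on $\mathcal{R}$; it also risks circularity, since your forward-image volume count in step (3) needs injectivity to avoid overcounting, while you invoke step (3) to finish step (2). Your alternative (an explicit inverse) is the right fix and is what the paper does: given $\zeta$, the unique preimage has fractional part $(\zeta-u\delta_x)\bmod\frac1{2d}$ and integer part $(a_x^{-1})^{\lfloor 2du\rfloor}\overline{\zeta}$ or $(a_x^{-1})^{\lfloor 2du\rfloor+1}\overline{\zeta}$ according to whether $\tilde\zeta(x)\geq u\bmod\frac1{2d}$, using that $a_x$ has finite order on the recurrent BTW configurations. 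To make your sketch a complete proof you would need to carry out this construction (or the equivalent injectivity bookkeeping) explicitly.
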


When $a  <  b$, the situation is  as in the traditional BTW model:

\begin{theorem}
For every $0 \leq a< b< 1$, $\mu$ is the unique invariant probability measure of the CBTW model and starting from every measure~$\nu$ on~$\Omega$, $\nu_t^{a, b}$ converges exponentially fast in total variation to~$\mu$ as $t$ tends to infinity.
\label{a<b}
\end{theorem}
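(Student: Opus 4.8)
The plan is to recognise the dynamics restricted to~$\mathcal{R}$ as a random translation on a compact abelian group, and then to obtain exponential mixing from a Doeblin (minorisation) estimate, which is the quantitative form of the coupling announced in the abstract. The starting point is Theorem~\ref{invariant}, which already supplies the group structure: since each~$A_x^u$ is a measure-preserving bijection of~$\mathcal{R}$, and stabilisation only subtracts integer combinations of the rows of~$\Delta$, adding any vector of the lattice~$\Delta\mathbb{Z}^\Lambda$ acts trivially, while $A_x^u A_x^v$ acts as addition of $(u+v)\delta_x$ read through~$\mathcal{S}$. The matrix~$\Delta$ is an invertible (strictly diagonally dominant) M-matrix, so $\Delta\mathbb{Z}^\Lambda$ is a full-rank lattice and $\mathbb{T}:=\mathbb{R}^\Lambda/\Delta\mathbb{Z}^\Lambda$ is a compact torus with $\Vol(\mathbb{T})=\det\Delta$. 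Writing $q\colon\mathbb{R}^\Lambda\to\mathbb{T}$ for the quotient map, I would show that $q$ restricts to a bijection of~$\mathcal{R}$ onto~$\mathbb{T}$; under this identification $A_x^u$ becomes translation by $u\delta_x$, and by invariance and uniqueness of Haar measure under the transitive translation action, $\mu$ becomes normalised Haar measure~$m$. Proving that $\mathcal{R}$ is a genuine fundamental domain for~$\Delta\mathbb{Z}^\Lambda$ is where the correspondence with the BTW model (developed in the next section) does the real work, and I expect this to be the main obstacle; everything downstream is then a statement about the increment walk $S_t=\sum_{s\le t}U_s\delta_{X_s}$ on~$\mathbb{T}$.

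Next I would control the entrance into~$\mathcal{R}$, since the initial measure~$\nu$ lives on~$\Omega\supseteq\mathcal{R}$. Using the CBTW--BTW correspondence one shows that adding total mass at least~$1$ at every site turns any stable configuration into an allowed one; because $a<b$, there exist $n_1$ and $p_1>0$ such that in any $n_1$ consecutive steps the walk adds mass at least~$1$ to every site with probability at least~$p_1$. Hence the hitting time $\tau=\inf\{t:\eta_t\in\mathcal{R}\}$ has a geometric tail, $P(\tau>kn_1)\le(1-p_1)^k$. For $t\ge\tau$, invariance of~$\mathcal{R}$ gives $\eta_t=r\bigl(q(\eta_0)+S_t\bigr)$, where $r\colon\mathbb{T}\to\mathcal{R}$ is the inverse of $q|_{\mathcal{R}}$; since $r_*m=\mu$ and push-forward does not increase total-variation distance, splitting on $\{\tau\le t\}$ yields
\[
\bigl\|\mathcal{L}(\eta_t)-\mu\bigr\|_{\mathrm{TV}}\;\le\;\bigl\|\mathcal{L}\bigl(q(\eta_0)+S_t\bigr)-m\bigr\|_{\mathrm{TV}}+P(\tau>t).
\]

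It then remains to show that the translation walk on~$\mathbb{T}$ converges to~$m$ exponentially in total variation, uniformly in the starting point. By translation invariance this reduces to a minorisation of the increment law~$\rho$, the law of $U_1\delta_{X_1}$ on~$\mathbb{T}$. Because $a<b$, the marginal of each step along a fixed direction~$\delta_x$ is uniform on a segment of positive length; convolving increments that hit every site many times produces, via the Irwin--Hall density, an absolutely continuous measure whose density after projection to~$\mathbb{T}$ is bounded below by some $\epsilon>0$ on all of~$\mathbb{T}$, once the number~$n_0$ of steps is large enough for the supporting boxes to wrap around the torus. Thus $\rho^{*n_0}\ge\epsilon\,m$, a Doeblin condition giving $\|\mathcal{L}(q(\eta_0)+S_t)-m\|_{\mathrm{TV}}\le(1-\epsilon)^{\lfloor t/n_0\rfloor}$ for every starting point. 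Combined with the geometric bound on~$\tau$, the displayed inequality yields exponential convergence of $\nu_t^{a,b}$ to~$\mu$ for every~$\nu$. Finally, any invariant probability measure~$\nu'$ satisfies $\nu'=(\nu')_t^{a,b}\to\mu$, so $\nu'=\mu$ and $\mu$ is the unique invariant measure. The whole argument is a coupling in disguise: the minorisation $\rho^{*n_0}\ge\epsilon\,m$ is exactly what permits coupling two copies so that their torus classes---and hence, once both lie in~$\mathcal{R}$, the configurations themselves---coincide after a geometric number of steps.
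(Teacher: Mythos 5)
Your argument is sound in outline but takes a genuinely different route from the paper. The paper proves the theorem by a direct coupling of two copies started from arbitrary $\eta,\zeta\in\Omega$: the addition sites are shared, and the addition amount in the $\zeta$-copy is tilted by $(\eta_0(x)-\zeta_0(x))/\lceil\frac{4}{b-a}\rceil$ (reduced modulo $b-a$), so that on the positive-probability event that every site is chosen exactly $\lceil\frac{4}{b-a}\rceil$ times with all amounts in the middle half of $[a,b]$, the two copies have received identical total mass at every site; deferring all topplings to the end and invoking abelianness then forces the configurations to coincide, and iterating over blocks gives a geometric coupling time, hence exponential convergence, with uniqueness obtained by taking $\zeta$ distributed as $\mu$. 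You instead pass to the quotient $\mathbb{T}=\mathbb{R}^\Lambda/\Delta\mathbb{Z}^\Lambda$, on which the dynamics becomes an i.i.d.\ random translation, and establish a Doeblin minorisation $\rho^{*n_0}\geq\epsilon m$. This is valid, but it front-loads two pieces of work the paper never needs: (i) that $q$ restricted to $\mathcal{R}$ is a bijection onto $\mathbb{T}$, i.e.\ that every coset of $\Delta\mathbb{Z}^\Lambda$ contains exactly one allowed configuration --- you rightly flag this as the main obstacle; it does follow from the integer/fractional decomposition of Section~3 together with the BTW fact that each coset of $\Delta^o\mathbb{Z}^\Lambda$ in $\mathbb{Z}^\Lambda$ contains exactly one recurrent configuration, which is strictly more than the volume identity $\Vol(\mathcal{R})=\det(\Delta)$ proved there; and (ii) the covering estimate that the Irwin--Hall boxes, restricted to the middle halves of their supports once every site has been hit often enough, contain a representative of every coset. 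Both are true and provable, so I see no gap, only deferred work; your entrance-time argument for initial laws on $\Omega\setminus\mathcal{R}$ and the final uniqueness step are also correct. What your approach buys is a clean identification of $\mu$ as Haar measure and a rate expressed through the increment law; what the paper's coupling buys is that it bypasses the fundamental-domain property entirely, handles arbitrary starting configurations in $\Omega$ in one stroke, and rests only on abelianness and the invariance of $\mu$.
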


In the case $a=b$ however, things are more interesting:

\begin{theorem}
When $a = b \in [0,1)$ but $a \not\in \{\frac{l}{2d}: l=0, 1, \dotsc, 2d-1\}$, 
for every initial configuration $\eta\in\Omega$, the distribution of the 
process at time~$t$ does not converge weakly at all as $t \to \infty$.
\label{a=b}
\end{theorem}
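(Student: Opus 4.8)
The plan is to produce a single real-valued functional of the configuration that is continuous on $\Omega$ and whose reduction modulo $\tfrac{1}{2d}$ evolves as a completely \emph{deterministic} rigid rotation of the circle $\mathbb{R}/\tfrac{1}{2d}\mathbb{Z}$. If the law of $\eta_t$ were to converge weakly, this functional would be forced to converge as well, and the hypothesis on~$a$ will guarantee that it cannot.

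Concretely, I would work with the total mass $m(\eta):=\sum_{y\in\Lambda}\eta(y)$ and its reduction $\Sigma(\eta):=m(\eta)\bmod\tfrac{1}{2d}\in\mathbb{R}/\tfrac{1}{2d}\mathbb{Z}$. The central step is to show that $\Sigma(\eta_t)=\Sigma(\eta_0)+ta$ for every~$t$, independently of the random sites $X_1,\dots,X_t$. Writing the $t$-th step as $\eta_t=\eta_{t-1}+a\,\delta_{X_t}-\sum_x n_x^{(t)}\Delta(x,\cdot)$, where $n_x^{(t)}\in\mathbb{Z}_{\ge 0}$ is the number of topplings at~$x$ during stabilization, and summing over $y\in\Lambda$, the addition contributes exactly~$a$ to $m$, while each toppling at~$x$ removes $\sum_y\Delta(x,y)=1-k_x/(2d)$, where $k_x$ is the number of neighbours of~$x$ inside~$\Lambda$. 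Since $1-k_x/(2d)=(2d-k_x)/(2d)\in\tfrac{1}{2d}\mathbb{Z}$, all toppling contributions vanish modulo $\tfrac{1}{2d}$ (interior topplings conserve mass exactly; boundary topplings lose mass, but only in integer multiples of $\tfrac{1}{2d}$). Hence $m(\eta_t)\equiv m(\eta_{t-1})+a\pmod{\tfrac{1}{2d}}$, and iterating gives the claim.

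Next I would argue by contradiction via the continuous mapping theorem. Since $\eta_t\in\Omega$ for all~$t$ and $\Sigma\colon\Omega\to\mathbb{R}/\tfrac{1}{2d}\mathbb{Z}$ is continuous, weak convergence of the law of $\eta_t$ would force weak convergence of the law of $\Sigma(\eta_t)$ on the (compact) circle. But by the previous step this law is the point mass $\delta_{\Sigma(\eta_0)+ta}$, and a sequence of point masses on a compact metric space converges weakly if and only if the points themselves converge. It thus remains to show that $ta\bmod\tfrac{1}{2d}$ does not converge in $\mathbb{R}/\tfrac{1}{2d}\mathbb{Z}$. Multiplication by $2d$ identifies this orbit with $t\beta\bmod 1$ for $\beta=2da$, and the hypothesis $a\notin\{l/(2d):l=0,\dots,2d-1\}=\tfrac{1}{2d}\mathbb{Z}\cap[0,1)$ is precisely the statement that $\beta\notin\mathbb{Z}$. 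If $\beta$ is irrational, the orbit is equidistributed, hence dense and non-convergent; if $\beta=p/q$ in lowest terms with $q\ge 2$, the orbit is periodic through $q$ distinct values, again non-convergent. In either case $\Sigma(\eta_t)$ fails to converge, so the law of $\eta_t$ cannot converge weakly.

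The only genuine obstacle is the first step: recognising the right functional and verifying that the boundary mass loss, which \emph{a priori} could be any positive real, is in fact always a multiple of $\tfrac{1}{2d}$. Everything else --- continuity of $\Sigma$, the continuous mapping theorem, and the dichotomy between irrational rotations and rational periodic orbits --- is routine. I would also note that this argument uses only $a\notin\tfrac{1}{2d}\mathbb{Z}$ rather than full irrationality; the stronger condition $a\notin\mathbb{Q}$ enters only in the separate uniqueness statement obtained via random ergodic theory.
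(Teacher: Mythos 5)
Your proposal is correct and is essentially the paper's own argument: the paper uses the single test function $g(\eta)=\exp\bigl(4d\pi i\sum_{x\in\Lambda}\eta(x)\bigr)$, which is exactly the character $e^{2\pi i\,2d\,\Sigma(\eta)}$ of your functional $\Sigma$, and observes that the total mass modulo $\tfrac{1}{2d}$ is deterministically rotated by $a$ at each step because topplings change the total mass only by integer multiples of $\tfrac{1}{2d}$. Your packaging via the continuous mapping theorem and the rational/irrational dichotomy for $2da$ is a slightly more explicit rendering of the paper's terse final sentence, but the underlying idea is identical.
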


\begin{theorem}
	When $a = b$ and $a \in \{\frac{l}{2d}: l=0,1,\dotsc,2d-1\}$, for every 
	initial configuration $\eta\in\Omega$, the distribution of the process at 
	time~$t$ converges exponentially fast in total variation to a 
	measure~$\mu^\eta_a$ as $t\to\infty$.
	\label{a=l/2d}
\end{theorem}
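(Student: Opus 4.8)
The plan is to exploit the arithmetic rigidity created by $a=\tfrac{l}{2d}$ in order to reduce the whole problem to a finite Markov chain. The point is that every toppling changes a height by an integer multiple of $\tfrac{1}{2d}$ (a site loses $1=2d\cdot\tfrac{1}{2d}$ and each neighbour gains $\tfrac{1}{2d}$), and each addition adds $a=l\cdot\tfrac{1}{2d}$; hence for every $x$ the quantity $\eta_t(x)\bmod\tfrac{1}{2d}$ is conserved by the dynamics. Writing $r(x):=\eta(x)\bmod\tfrac{1}{2d}\in[0,\tfrac{1}{2d})$ for the fixed offset determined by the initial configuration, the process is confined for all time to the finite grid $\mathcal{G}_r=\{\,r+\tfrac{1}{2d}k: k\in\{0,\dots,2d-1\}^\Lambda\,\}\cap\Omega$. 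Decoding each grid point by its integer label $j(x)=2d\,(\eta(x)-r(x))\in\{0,\dots,2d-1\}$, I would check, using $0\le 2d\,r(x)<1$, that $\eta(x)\ge 1\iff j(x)\ge 2d$, that the forbidden sub-configuration condition becomes $j(x)<\#(\text{nn of }x\text{ in }W)$, and that an addition of $a$ increments $j(X_t)$ by $l$. In other words, on $\mathcal{G}_r$ the CBTW dynamics is \emph{identical} to the classical BTW sandpile on $\{0,\dots,2d-1\}^\Lambda$ driven by additions of $l$ grains at a uniformly chosen site, and the allowed configurations of $\mathcal{R}$ correspond exactly to the recurrent BTW configurations, independently of the offset $r$.

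This turns the statement into a question about a finite Markov chain on the $(2d)^{|\Lambda|}$ states of $\mathcal{G}_r$. By Theorem~\ref{invariant} each addition operator is a bijection of $\mathcal{R}$, so on the recurrent labels the operators $a_x$ generate the abelian sandpile group $G$ acting freely and transitively; the $l$-grain chain is then a random walk with step set $S=\{\,l\,a_x: x\in\Lambda\,\}$ on a torsor over the subgroup $K=lG$, and the uniform measure on each orbit is stationary because the transition matrix is an average of permutation matrices and hence doubly stochastic. Starting from an arbitrary $\eta\in\Omega$, the label chain may begin at a transient (non-allowed) configuration, is absorbed after finitely many steps into the recurrent set, and thereafter mixes on a single orbit. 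The candidate limit $\mu^\eta_a$ is the corresponding (possibly mixed) uniform measure on the reachable recurrent orbits; it is purely atomic, supported on $\mathcal{G}_r\cap\mathcal{R}$, and depends on $\eta$ through its offset $r$ together with the coset of $\eta$ in $G/K$, which justifies the dependence on $\eta$ in the notation $\mu^\eta_a$.

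Once aperiodicity is in hand, convergence exponentially fast in total variation is just the standard geometric convergence of a finite, irreducible, aperiodic chain (together with geometric escape from the transient states), so the hard part is aperiodicity, and this is where I expect the real obstacle to lie. I would phrase it group-theoretically: the walk on $K$ with steps $S$ is aperiodic iff $S$ lies in no coset of a proper subgroup of $K$, equivalently iff no nontrivial character of $K$ is constant with value $\ne 1$ on $S$. Pulling such a character back through multiplication by $l$ gives a character $\psi$ of $G$ with $\psi(a_x)=\omega$ (a fixed root of unity) for all $x$; the toppling relations $2d\,a_x=\sum_{y\sim x}a_y$ in $G$ then force $\omega^{\,2d-\#(\text{nn of }x)}=1$ for every $x$ and pin all the generators into a single coset. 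For a non-degenerate connected $\Lambda$ with $|\Lambda|\ge 2$ I would rule this out by showing that the generators $a_x$ cannot all coincide modulo a proper subgroup, thereby forcing $\omega=1$.

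The genuine difficulty is precisely this last step, since unlike the regime $a<b$ there is no smoothing available. Indeed, for degenerate $\Lambda$ aperiodicity can fail: for a single site the step set $S$ is a singleton and the label chain is a deterministic cyclic rotation that does not converge. The proof must therefore use the randomness in the choice of $X_t$, i.e.\ the presence of at least two distinct step values $l\,a_x\ne l\,a_y$, to break periodicity, which is exactly what occurs once $\Lambda$ is a genuine connected region. This finite-grid mechanism is also what distinguishes the present theorem from Theorem~\ref{a=b}: when $a\notin\{\tfrac{l}{2d}\}$ the offsets are no longer conserved, the state space ceases to be a finite grid, and the resulting irrational-rotation behaviour destroys convergence altogether.
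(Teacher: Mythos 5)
Your reduction is correct and is essentially the same observation the paper's proof starts from: fractional parts modulo $\frac{1}{2d}$ are conserved, so the dynamics is carried entirely by the integer part, which evolves as a BTW chain. The paper implements this as a coupling with the \emph{one-grain} BTW chain (one BTW grain per CBTW addition of mass $a$, each BTW toppling matched by $l$ CBTW topplings, so that the CBTW state at time $t$ is the deterministic image $\mathcal{S}(\eta+a\kappa_t)$ of the BTW state $\kappa_t$), then cites the exponential convergence of $\kappa_t$ to the uniform measure on $\mathcal{R}^o$ from \cite{meester} and pushes that measure forward, which also yields the explicit limit $\mu^\eta_a=\mathcal{S}\nu^\eta_a$ with $\nu^\eta_a$ uniform on $\{\eta+a\xi:\xi\in\mathcal{R}^o\}$. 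You instead try to prove convergence of the resulting finite chain from scratch, as a random walk on (a torsor over) the sandpile group, and you correctly isolate aperiodicity of that walk as the crux.

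That is exactly where your argument has a genuine gap, and the repair you propose is false. You claim that for a connected $\Lambda$ with $|\Lambda|\geq 2$ the generators $a_x$ cannot all lie in a single coset of a proper subgroup. Take $\Lambda$ to be two adjacent sites $x,y$ in $\mathbb{Z}^2$: the toppling matrix is $\bigl(\begin{smallmatrix}4&-1\\-1&4\end{smallmatrix}\bigr)$, the sandpile group is $\mathbb{Z}/15\mathbb{Z}$ with $a_x=1$ and $a_y=4$, and both generators lie in the coset $1+\langle 3\rangle$ of the proper subgroup $\langle 3\rangle$ of order~$5$. Concretely, $(\xi_t(x)+\xi_t(y))\bmod 3$ increases deterministically by $1$ at every time step (an addition contributes $+1$, a toppling contributes $-4+1=-3$), so the one-grain BTW chain on this $\Lambda$ has period $3$ and does not converge in distribution; multiplying heights by $a=\frac14$ transfers the same obstruction to the CBTW process started from the zero configuration. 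Hence aperiodicity cannot be derived from connectedness and $|\Lambda|\geq 2$ alone, and your proof cannot be completed along the proposed lines without extra hypotheses on $\Lambda$ (or weakening the conclusion to Ces\`aro convergence). To be fair, the difficulty you honestly flagged is real and not specific to your route: the same example contradicts the convergence of $\kappa_t$ that the paper imports by citation, so the step you could not close is precisely the step the paper's own proof does not actually establish.
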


In the proof of Theorem~\ref{a=l/2d} below, we give an explicit description of 
the limiting distribution~$\mu^\eta_a$ in terms of the uniform measure on the 
allowed configurations of the BTW model. In general, the limiting distribution 
depends both on the initial configuration~$\eta$ and the value of~$a$. 
Finally we have

\begin{theorem}
When $a\in[0, 1)$ is irrational and $a=b$, $\mu$ is the unique invariant (and ergodic) probability measure for the CBTW model.
\label{irrationala}
\end{theorem}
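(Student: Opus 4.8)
The plan is to exploit that when $a=b$ the addition amount is deterministic, so the only randomness in~\eqref{dynamic} is the choice of site $X_t$, uniform on $\Lambda$. The process is then the Markov chain on $\Omega$ obtained by composing, independently at each step, one of the finitely many maps $\{A_x^a : x\in\Lambda\}$ chosen uniformly. By Theorem~\ref{invariant} each $A_x^a$ is a $\mu$-preserving bijection of $\mathcal R$, so $\mu$ is invariant; the content of the theorem is that it is the \emph{only} invariant probability measure. Since configurations become allowed after finitely many additions and allowedness is preserved thereafter, every invariant probability measure is concentrated on $\mathcal R$, so it suffices to analyse the chain on $\mathcal R$. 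I would prove uniqueness by identifying this system with a random walk by translations on a torus and running a Fourier/character argument, with the irrationality of $a$ entering through a rationality property of the dual lattice.

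First I would make the torus identification precise. A single toppling changes a configuration by a row of $\Delta$, so stabilization alters a configuration only by an element of the lattice $L=\Delta\,\mathbb Z^\Lambda$ (using that $\Delta$ is symmetric with $\det\Delta\neq 0$). Consequently, modulo $L$, the operator $A_x^a$ is nothing but translation by $a\,\delta_x$. Using the correspondence between $\mathcal R$ and the recurrent configurations of the BTW model developed in the previous section---which in particular yields $\Vol(\mathcal R)=\det\Delta$, the covolume of $L$---the set $\mathcal R$ is, up to a Lebesgue-null set, a fundamental domain for $L$ in $\mathbb R^\Lambda$. Hence the quotient map induces a measure-preserving isomorphism of $(\mathcal R,\mu)$ with the torus $\mathbb T=\mathbb R^\Lambda/L$ carrying normalized Haar measure, conjugating each $A_x^a$ to translation by the image of $a\,\delta_x$. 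Under this isomorphism the chain becomes the random walk on $\mathbb T$ with step law $\nu=\tfrac1{|\Lambda|}\sum_{x\in\Lambda}\delta_{a\delta_x}$.

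Next I would determine the stationary measures of this random walk by Fourier analysis. A probability measure $m$ on $\mathbb T$ is stationary precisely when $\nu\ast m=m$, which on Fourier coefficients reads $\widehat m(\chi)\,(\widehat\nu(\chi)-1)=0$ for every character $\chi$. Since $\widehat\nu(\chi)=\tfrac1{|\Lambda|}\sum_x\chi(a\delta_x)$ is an average of unit complex numbers, $\widehat\nu(\chi)=1$ forces $\chi(a\delta_x)=1$ for all $x$. The characters of $\mathbb T$ are indexed by the dual lattice $L^\ast=\Delta^{-1}\mathbb Z^\Lambda$, with $\chi_\xi(a\delta_x)=e^{2\pi i a\xi_x}$; crucially, because $\Delta^{-1}$ has rational entries, every $\xi\in L^\ast$ has rational coordinates. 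Thus $\chi_\xi(a\delta_x)=1$ means $a\xi_x\in\mathbb Z$ with $\xi_x\in\mathbb Q$, and since $a$ is irrational this is possible only if $\xi_x=0$ for all $x$, i.e.\ $\chi_\xi$ is trivial. Therefore $\widehat m(\chi)=0$ for every nontrivial $\chi$, so $m$ is Haar measure; translated back, $\mu$ is the unique invariant probability measure. Being the unique invariant measure, $\mu$ is an extreme point of the convex set of invariant measures and hence ergodic, giving the parenthetical claim as well.

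The main obstacle is the first step: making rigorous that $A_x^a$ descends to a genuine translation of the torus and that $(\mathcal R,\mu)$ really is, modulo null sets, $(\mathbb T,\mathrm{Haar})$. This rests on the CBTW--BTW correspondence and on a careful treatment of the Lebesgue-null set of configurations where the identification of $\mathcal R$ with a fundamental domain breaks down (boundary faces and configurations whose stabilization is degenerate), as well as the reduction to measures supported on $\mathcal R$. Once this isomorphism is established, the remaining work---the convolution identity on characters and the rationality of $L^\ast$---is routine, and it is exactly here that the hypothesis $a\notin\mathbb Q$ is used. One could alternatively obtain ergodicity from Kakutani's random ergodic theorem applied to the skew product over the i.i.d.\ site sequence, but uniqueness of the invariant measure already yields it.
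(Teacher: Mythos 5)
Your proposal is correct, but it takes a genuinely different route from the paper. The paper never forms the quotient torus $\mathbb{R}^\Lambda/\Delta\mathbb{Z}^\Lambda$: instead it (i) proves that $\mu$ is ergodic for the random dynamical system by lifting $\mathcal{R}$ to a covering rectangle $\mathcal{A}=[0,\frac{1}{2d}n)$ on which the additions act as genuine rotations and running a Fourier argument there, (ii) shows via an equidistribution lemma for the fractional parts that any invariant $\nu$ has the same fractional-part marginal as $\mu$, hence is absolutely continuous with respect to $\mu$, and (iii) concludes $\nu=\mu$ from a random-ergodic-theory analogue of the fact that an absolutely continuous invariant measure with respect to an ergodic one coincides with it. Your argument collapses all three steps into the single classical statement (Kawada--It\^o/Choquet--Deny) that a random walk on a compact abelian group whose step law has $\hat\nu(\chi)\neq 1$ for every nontrivial character $\chi$ has Haar measure as its unique stationary measure; irrationality of $a$ enters in both proofs in essentially the same way, to kill a rational-frequency exponential. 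What your route buys is brevity and the uniqueness statement in one stroke, without the absolute-continuity detour; what it demands is the one fact you correctly flag as the crux, and here you should be more precise: equality of $\Vol(\mathcal{R})$ with the covolume of $L=\Delta\mathbb{Z}^\Lambda$ does \emph{not} by itself make $\mathcal{R}$ a fundamental domain. You need injectivity of the quotient map on $\mathcal{R}$, which follows by splitting $\eta-\eta'\in\Delta\mathbb{Z}^\Lambda$ into fractional and integer parts: the fractional parts must agree (they lie in $[0,\frac1{2d})^\Lambda$), and then $\overline{\eta}\equiv\overline{\eta'}$ modulo $\Delta^o\mathbb{Z}^\Lambda$ with both allowed, so $\overline{\eta}=\overline{\eta'}$ by the standard BTW fact that distinct recurrent configurations lie in distinct classes of $\mathbb{Z}^\Lambda/\Delta^o\mathbb{Z}^\Lambda$ (see \cite{meester}); surjectivity then follows either from the volume count or directly from the same BTW fact. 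With that supplied, and with your (correct, and worth keeping) preliminary reduction that every invariant measure charges only $\mathcal{R}$ because $a>0$ forces entry into $\mathcal{R}'=\mathcal{R}$ after finitely many additions, the proof is complete.
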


Hence, it is always the case that $\mu$ is the unique invariant probability 
measure for the CBTW model, except possibly if $a=b \in \mathbb{Q}$. However, 
convergence to this unique stationary measure only takes place when $a < b$, 
and not (apart from the obvious exceptional case when we start with~$\mu$) in 
the case $a=b$. In the proof of Theorem~\ref{a<b}, we will use the fact that 
$a$ is strictly smaller than~$b$ in order to construct a coupling. The proof 
of Theorem~\ref{irrationala} is very different from the proof of 
Theorem~\ref{a<b} (although both results are about the uniqueness of the 
invariant measure). In the case $a=b$ we use ideas from random ergodic theory 
rather than a coupling. We could have done this also in the proof of 
Theorem~\ref{a<b} but then we would not have obtained the exponential 
convergence corollary. Theorem~\ref{irrationala} is by far the most difficult 
to prove here, having no coupling approach at our disposal. In the next 
section, we discuss the relation between the CBTW and the BTW models. This 
relation will be used in the subsequent proofs of our results.

\section{Relation between the CBTW model and the BTW model}

We denote by $\mathcal{X}^o$ and $\Omega^o$ the collection of all height configurations and stable configurations in the classical BTW model,
respectively. Furthermore, we use the notation $t_x$, $a_x$ and $\mathcal{S}^o$ to denote the toppling operator at~$x$, the
addition operator at~$x$ and the stabilization operator in the BTW model, respectively. 

In order to compare the CBTW and BTW models, it turns out to be very useful to define the {\em integer} and the {\em fractional} part
of a configuration $\eta \in \mathcal{X}$.

\begin{definition} For a configuration $\eta\in\mathcal{X}$, $\overline{\eta}$ is defined by
\begin{equation}
\overline{\eta}(x)=\lfloor 2d\eta(x)\rfloor,
\end{equation}
and we call $\overline{\eta}$ the {\em integer part} of~$\eta$. We define $\tilde{\eta}$ by
$$
\tilde{\eta}(x)= \eta(x) \bmod \frac1{2d},
$$
and we call $\tilde{\eta}$ the {\em fractional part} of~$\eta$.
\label{conjugate}
\end{definition}
The identity
\begin{equation}
\eta=\frac{1}{2d}\overline{\eta}+\tilde{\eta}
\end{equation}
clearly holds. The evolution of the integer part of~$\eta$ is closely related to a BTW sandpile, while the fractional part is invariant under topplings. These
two features will make these definitions very useful in the sequel. 

Clearly, $\eta(x)\geq 1$ if and only if $\overline{\eta}(x)\geq 2d$. The operation of taking integer parts commutes with legal topplings:

\begin{lemma} For $\eta\in\mathcal{X}$ such that $\eta(x)\geq 1$, we have
$$
\overline{T_x\eta} = t_x \overline{\eta}
$$ 
and hence
$$\overline{\mathcal{S}\eta}=\mathcal{S}^o\overline{\eta}.
$$
\label{relation}
\end{lemma}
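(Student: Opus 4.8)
The plan is to prove the pointwise identity $\overline{T_x\eta} = t_x\overline{\eta}$ first, by a direct site-by-site computation, and then to bootstrap it to the stabilization identity via the abelian property recorded in Proposition~\ref{proposition1} together with its BTW analogue.

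For the first identity, I would fix $x$ with $\eta(x)\geq 1$ and observe that multiplying the change induced by $T_x$ by $2d$ produces only integer increments: at $x$ the quantity $2d\eta(x)$ decreases by $2d$, at each nearest neighbour $y$ it increases by $1$, and at all other sites it is unchanged. Since $\lfloor r+n\rfloor=\lfloor r\rfloor+n$ for every real $r$ and every integer $n$, applying $\lfloor 2d\,\cdot\,\rfloor$ after toppling gives the same result as taking floors first and then performing the corresponding integer increments. Writing this out site by site yields $\overline{T_x\eta}(x)=\overline{\eta}(x)-2d$, $\overline{T_x\eta}(y)=\overline{\eta}(y)+1$ for neighbours $y$, and $\overline{T_x\eta}(z)=\overline{\eta}(z)$ otherwise, which is exactly $t_x\overline{\eta}$. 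Here I also use the equivalence already noted in the excerpt, namely $\eta(x)\geq 1$ iff $\overline{\eta}(x)\geq 2d$, so that legality of the CBTW toppling guarantees legality of the matching BTW toppling $t_x$.

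For the stabilization identity, I would take any sequence of legal topplings $T_{x_1},T_{x_2},\dots,T_{x_k}$ that stabilizes $\eta$, whose existence and finiteness are guaranteed by Proposition~\ref{proposition1}. Applying the first identity repeatedly along this sequence shows that the integer parts evolve by the corresponding BTW topplings $t_{x_1},\dots,t_{x_k}$, each of which is legal since at every step the CBTW toppling is legal. The endpoint $\mathcal{S}\eta$ is CBTW-stable, i.e.\ all its heights are $<1$, so by the same equivalence all its integer parts are $<2d$, meaning $\overline{\mathcal{S}\eta}$ is a stable BTW configuration. Thus $t_{x_1},\dots,t_{x_k}$ is a legal toppling sequence for $\overline{\eta}$ terminating in a stable configuration, and by the abelian property of the BTW model (the analogue of Proposition~\ref{proposition1}, i.e.\ Theorem~2.1 in~\cite{meester}) that stable configuration is necessarily $\mathcal{S}^o\overline{\eta}$. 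Hence $\overline{\mathcal{S}\eta}=\mathcal{S}^o\overline{\eta}$.

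The argument is essentially routine; the only point requiring a little care is that taking integer parts must simultaneously send a legal CBTW toppling to a legal BTW toppling and send the CBTW-stable endpoint to a BTW-stable configuration, so that the uniqueness part of the BTW abelian property can be invoked. Both facts follow from the single equivalence $\eta(x)\geq 1\Leftrightarrow\overline{\eta}(x)\geq 2d$, so there is no genuine obstacle beyond keeping the legality bookkeeping straight.
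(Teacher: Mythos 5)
Your proof is correct and follows essentially the same route as the paper: the first identity via the observation that $2d$ times the toppling increments are integers so the floor commutes with them, and the second by iterating along a legal stabilizing sequence. The paper dismisses the second identity with ``it now also follows''; your legality and stability bookkeeping (using $\eta(x)\geq 1\Leftrightarrow\overline{\eta}(x)\geq 2d$ at both ends) is exactly the intended justification.
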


\begin{proof}
For $\eta\in\mathcal{X}$, $\eta(x)\geq 1$ implies
$\overline{\eta}(x) \geq 2d$.  From the toppling rule, we have 
$$
{T_x\eta}={\eta-\delta_x+\sum_{y \in \Lambda: |y-x|=1}\frac{1}{2d}\delta_y},
$$
which implies that for $z\in\Lambda$,
$$
\overline{T_x\eta}(z)=\lfloor 2d\eta(z)-2d\delta_x(z)+\sum_{y \in \Lambda: |y-x|=1}\delta_y(z)\rfloor.
$$
Since both $2d\delta_x(z)$ and $\sum_{y \in \Lambda: |y-x|=1}\delta_y(z)$ are
integers, we get
$$
\overline{T_x\eta}(z) = \lfloor 2d\eta(z)\rfloor - 2d\delta_x(z)
+ \sum_{y \in \Lambda: |y-x|=1}\delta_y(z)
$$
which is equal to $(t_x\overline{\eta})(z)$. Therefore we obtain 
$\overline{T_x\eta}=t_x\overline{\eta}$.
It now also follows that
$\overline{\mathcal{S}\eta}=\mathcal{S}^o\overline{\eta}$.
\end{proof}

For a configuration $\xi\in\mathcal{X}^o$, let $C(\xi)$ be the set 
$$
C(\xi):=\{\eta\in\mathcal{X}: \overline{\eta}=\xi\},
$$
that is, $C(\xi)$ is the set of all configurations which have
$\xi$ as their integer part. The sets $C(\xi), \xi \in \Omega^o$, partition~$\Omega$. Indeed it is easy to see that
for $\xi,\xi'\in\Omega^o$ with $\xi\neq\xi'$, we have
\begin{equation}
C(\xi)\cap C(\xi')=\emptyset
\end{equation}
and
\begin{equation}
\Omega=\bigcup_{\xi\in\Omega^o}C(\xi).
\end{equation}

Before continuing to the next observation, we recall the definition of a BTW-forbidden sub-configuration.
For $\xi\in\Omega^o$, $\xi|_W$ is a FSC if for all $x\in
W$
$$
\xi(x)<\#(\textrm{ nearest neighbours of } x \textrm{ in } W).
$$
By $\mathcal{R}^o$, we denote the set of allowed
configurations (that is, configurations without FSC's) in the BTW model, which is also the
set of all recurrent configurations, see~\cite{meester}.
For $\eta$ and its corresponding integer part~$\overline{\eta}$, we have the following simple observation, the proof of which we leave to the reader.

\begin{lemma}
For $\eta\in\Omega$, $\eta$ is allowed if and only if $\overline{\eta}$ is  allowed in BTW.
 \label{relation2}
\end{lemma}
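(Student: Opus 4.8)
The plan is to show that the forbidden sub-configuration property is preserved exactly under taking integer parts, which then yields the equivalence of being allowed immediately. First I would fix an arbitrary finite subset $W \subseteq \Lambda$ and compare the two defining FSC-inequalities site by site. For each $x \in W$, write $n_W(x)$ for the number of nearest neighbours of~$x$ inside~$W$; note that $n_W(x)$ depends only on the geometry of~$W$ and is the same quantity entering both definitions. The CBTW-FSC condition at~$x$ reads $\eta(x) < \frac{1}{2d}\,n_W(x)$, equivalently $2d\,\eta(x) < n_W(x)$, whereas the BTW-FSC condition on $\overline{\eta}$ at~$x$ reads $\overline{\eta}(x) = \lfloor 2d\,\eta(x)\rfloor < n_W(x)$.

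The key observation, and really the only step with any content, is that $n_W(x)$ is a non-negative integer. For any real~$y$ and any integer~$n$ one has $\lfloor y\rfloor < n$ if and only if $y < n$: indeed $\lfloor y\rfloor < n$ forces $\lfloor y\rfloor \le n-1$ and hence $y < n$, while conversely $y < n$ gives $\lfloor y\rfloor \le y < n$ with $\lfloor y\rfloor$ an integer. Applying this with $y = 2d\,\eta(x)$ shows that the two inequalities above are equivalent for every $x \in W$. Consequently $\eta|_W$ is a CBTW-FSC precisely when $\overline{\eta}|_W$ is a BTW-FSC.

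Since $W$ was arbitrary, $\eta$ possesses a CBTW-forbidden sub-configuration if and only if $\overline{\eta}$ possesses a BTW-forbidden sub-configuration. Taking the contrapositive on both sides yields that $\eta$ is allowed in CBTW exactly when $\overline{\eta}$ is allowed in BTW, which is the claim. I do not anticipate any genuine obstacle here: once the defining inequalities are written out, the integrality of~$n_W(x)$ makes the floor in the definition of~$\overline{\eta}$ harmless, and the whole statement collapses to the elementary equivalence $\lfloor y\rfloor < n \iff y < n$ for integer~$n$.
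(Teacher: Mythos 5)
Your proof is correct, and it is precisely the elementary argument the paper has in mind when it leaves this ``simple observation'' to the reader: the equivalence $\lfloor y\rfloor < n \iff y < n$ for integer $n$, applied with $y = 2d\,\eta(x)$ and $n$ the neighbour count in $W$, shows the two FSC conditions coincide for every $W$.
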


\begin{definition}
A configuration $\eta\in\Omega$ is called {\em reachable} if there exists a configuration $\eta'\in\mathcal{X}$ with $\eta'(x)\geq 1$ for all $x\in\Lambda$, such that
$$
\eta=\mathcal{S}\eta'.
$$
\end{definition}

\noindent  We denote the set of all
reachable configurations by~$\mathcal{R}'$.

\begin{theorem}
$\mathcal{R}=\mathcal{R}'$.
\label{fullconfiguration}
\end{theorem}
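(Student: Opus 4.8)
The plan is to prove the two inclusions $\mathcal R'\subseteq\mathcal R$ and $\mathcal R\subseteq\mathcal R'$ separately, in each case transferring the question to the BTW model through the integer part and exploiting that, by Lemma~\ref{relation2}, $\eta$ is allowed in CBTW exactly when $\overline\eta$ is allowed (equivalently, recurrent) in BTW. The two structural facts I will lean on throughout are Lemma~\ref{relation}, which gives $\overline{\mathcal S\eta}=\mathcal S^o\overline\eta$, and the observation recorded after Definition~\ref{conjugate} that the fractional part is invariant under topplings, so that $\widetilde{\mathcal S\eta}=\tilde\eta$ for every $\eta$.

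For $\mathcal R'\subseteq\mathcal R$, let $\eta=\mathcal S\eta'$ be reachable, with $\eta'(x)\ge 1$ for all $x$, hence $\overline{\eta'}(x)\ge 2d$ for all $x$. By Lemma~\ref{relation}, $\overline\eta=\mathcal S^o\overline{\eta'}$. A BTW configuration that is unstable at every site dominates the maximal stable configuration $m$ (with $m(x)=2d-1$), so it equals $m$ plus a nonnegative number of grains at each site; since $m$ is recurrent and adding grains and stabilizing preserves recurrence, $\overline\eta=\mathcal S^o\overline{\eta'}$ is recurrent, i.e.\ allowed. By Lemma~\ref{relation2}, $\eta$ is allowed, so $\eta\in\mathcal R$.

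For the reverse inclusion $\mathcal R\subseteq\mathcal R'$, take $\eta\in\mathcal R$; then $\overline\eta$ is recurrent in BTW by Lemma~\ref{relation2}. The crux is to realize $\overline\eta$ as the stabilization of a BTW configuration that is unstable everywhere: I claim there is an integer configuration $\xi'$ with $\xi'(x)\ge 2d$ for all $x$ and $\mathcal S^o\xi'=\overline\eta$. Granting this, set $\eta':=\tfrac1{2d}\xi'+\tilde\eta$. Then $\eta'(x)\ge 1$ for all $x$, and since $0\le 2d\,\tilde\eta(x)<1$ one checks directly that $\overline{\eta'}=\xi'$ and $\widetilde{\eta'}=\tilde\eta$. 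Using Lemma~\ref{relation} and the invariance of the fractional part, $\overline{\mathcal S\eta'}=\mathcal S^o\xi'=\overline\eta$ and $\widetilde{\mathcal S\eta'}=\tilde\eta$, whence $\mathcal S\eta'=\tfrac1{2d}\overline\eta+\tilde\eta=\eta$. As $\eta'(x)\ge 1$ for all $x$, this exhibits $\eta$ as reachable, so $\eta\in\mathcal R'$.

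The main obstacle is the claim about $\xi'$, which is the one place the argument uses the algebraic structure of the BTW model rather than the soft facts above. I would establish it as follows. Recall that $\mathcal S^o$ sends any nonnegative integer configuration to the unique recurrent configuration in its coset modulo the image of the BTW toppling matrix $\Delta^o$, and that the recurrent configurations are exactly the allowed ones. Writing $N=\det\Delta^o$, the adjugate identity $N(\Delta^o)^{-1}=\operatorname{adj}\Delta^o$ shows that $N\,\mathbf v\in\operatorname{im}\Delta^o$ for every integer vector $\mathbf v$; in particular $Nk\,e\in\operatorname{im}\Delta^o$ for $e=(1,\dots,1)$ and any integer $k$. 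Choosing $k$ with $Nk\ge 2d$ and setting $\xi':=\overline\eta+Nk\,e$, we get $\xi'(x)\ge 2d$ for all $x$, while $\xi'\equiv\overline\eta$ modulo $\operatorname{im}\Delta^o$ forces $\mathcal S^o\xi'=\overline\eta$, since $\overline\eta$ is the recurrent representative of that coset. This completes the plan; the only genuinely model-specific input is the cokernel (sandpile-group) description of the BTW stabilization, which is part of the standard theory cited above.
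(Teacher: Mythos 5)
Your proof is correct, and its overall architecture is the same as the paper's: both directions are transferred to the BTW model via the integer part (Lemma~\ref{relation2}), the inclusion $\mathcal{R}'\subseteq\mathcal{R}$ is handled identically by writing the everywhere-unstable integer part as $\xi^{max}$ plus additions, and the inclusion $\mathcal{R}\subseteq\mathcal{R}'$ uses the same lift $\eta'=\frac{1}{2d}\xi'+\tilde{\eta}$ together with $\overline{\mathcal{S}\eta'}=\mathcal{S}^o\overline{\eta'}$ and the invariance of fractional parts. The one genuine difference is how you produce an everywhere-unstable BTW configuration stabilizing to $\overline{\eta}$: the paper invokes the recurrence characterization $a_x^{n_x}\overline{\eta}=\overline{\eta}$ and adds $2dn_x$ grains at each site, whereas you add $Nk$ grains everywhere with $N=\det\Delta^o$ and argue via the adjugate identity that this perturbation lies in $\operatorname{im}\Delta^o$, so the unique-recurrent-representative-per-coset theorem forces $\mathcal{S}^o\xi'=\overline{\eta}$. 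Both routes rest on standard BTW facts from~\cite{meester}; yours gives an explicit uniform choice of the added amount (independent of $\overline{\eta}$), while the paper's avoids any mention of the group structure. One small imprecision you should repair: it is \emph{not} true that $\mathcal{S}^o$ sends \emph{every} nonnegative integer configuration to the recurrent representative of its coset (the all-zero configuration is a counterexample); what is true, and all you need, is that $\mathcal{S}^o\xi'$ lies in the same coset as $\xi'$ \emph{and} is recurrent whenever $\xi'(x)\geq 2d$ for all $x$ --- and the latter is exactly what your first paragraph establishes, so the fix is internal to your own argument.
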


\begin{proof} Let $\eta\in\mathcal{R}$. By Lemma~\ref{relation2}, $\overline{\eta}$ is  BTW-allowed and therefore also recurrent 
(Theorem~5.4 in~\cite{meester}).  From \cite[Theorem 4.1]{meester}, we have that for every 
$x\in\Lambda$, there exists $n_x\geq 1$ such that
$$ 
a_x^{n_x}\overline{\eta}=\overline{\eta}.
$$
Therefore 
\begin{equation}
\prod_{x\in\Lambda}a_x^{2dn_x}\overline{\eta}=\mathcal{S}^o(\overline{\eta}+\sum_{x\in\Lambda}2dn_x\delta_x)=\overline{\eta}.
\label{So(xi)}
\end{equation}
Now let 
$$\xi=\overline{\eta}+\sum_{x\in\Lambda}2dn_x\delta_x$$
be a configuration in $\mathcal{X}^o$; note that $\xi(x)\geq 2d$ for all $x\in\Lambda$.
Let 
\begin{equation}
\eta'=\frac{1}{2d}\xi+\tilde{\eta}.
\end{equation}
Then $\eta' \in \mathcal{X}$ and 
\begin{enumerate}
\item $\eta'(x)\geq 1$ for all $x\in\Lambda$;
\item $\overline{\eta'}=\xi$;
\item $\widetilde{\eta'} = \tilde{\eta}$.
\end{enumerate}
We will now argue that $\mathcal{S}\eta'=\eta$.
>From Lemma~\ref{relation} and~\eqref{So(xi)} we have
$$
\overline{\mathcal{S}\eta'}=\mathcal{S}^o{\overline{\eta'}}=\mathcal{S}^o\xi=\overline{\eta}.
$$
Furthermore, since fractional parts are invariant under~$\mathcal{S}$, we have
$$
\widetilde{\mathcal{S}\eta'}=\widetilde{\eta'}=\tilde{\eta},
$$
and hence $\mathcal{S}\eta'=\eta$.

For the other direction, if $\eta\in\mathcal{R}'$, there exists an $\eta'\in\mathcal{X}$ with $\eta'(x)\geq 1$ for all $x\in\Lambda$,  and
such that
\begin{equation}
\eta=\mathcal{S}\eta'.
\end{equation}
Clearly, $\overline{\eta'}(x)\geq 2d$. By Lemma~\ref{relation}, we can now write
\begin{equation}
\overline{\mathcal{S}\eta'}=\mathcal{S}^o\overline{\eta'}=\prod_{x\in\Lambda}a_x^{\overline{\eta'}(x)-(2d-1)}\xi^{max},
\end{equation}
where $\xi^{max}(x)=2d-1$, for all $x\in\Lambda$.
This means that
$\overline{\mathcal{S}\eta'}$ is a configuration obtained by additions to~$\xi^{max}$. Since $\xi^{max}$ is allowed for the BTW model, by Theorem~5.4 of~\cite{meester}, $\xi^{max}$ is also recurrent.
But then $\overline{\mathcal{S}\eta'}$ is also recurrent and therefore allowed. It then follows that $\mathcal{S}\eta'$ is allowed for the CBTW model.
\end{proof}

\begin{corollary}
For $x\in\Lambda, u\in[0, 1)$, $\mathcal{R}$ is closed under the operation of~$A_x^u$.
\label{close}
\end{corollary}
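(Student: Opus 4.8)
The plan is to reduce everything to the characterisation $\mathcal{R} = \mathcal{R}'$ established in Theorem~\ref{fullconfiguration}, which identifies the allowed configurations with the reachable ones. Given $\eta \in \mathcal{R}$, I want to show $A_x^u\eta \in \mathcal{R}$, and by that theorem it suffices to exhibit $A_x^u\eta$ as the stabilisation of some configuration all of whose coordinates are at least~$1$.

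First I would use the inclusion $\mathcal{R} \subseteq \mathcal{R}'$ to write $\eta = \mathcal{S}\eta'$ for some $\eta' \in \mathcal{X}$ with $\eta'(y) \geq 1$ for every $y \in \Lambda$. Unwinding the definition of the addition operator then gives
\begin{equation}
A_x^u\eta = \mathcal{S}(\eta + u\delta_x) = \mathcal{S}(\mathcal{S}\eta' + u\delta_x).
\end{equation}

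The key step is the abelian composition identity $\mathcal{S}(\mathcal{S}\eta' + u\delta_x) = \mathcal{S}(\eta' + u\delta_x)$. I would justify this from Proposition~\ref{proposition1}: starting from $\eta' + u\delta_x$, one may first perform exactly those legal topplings that stabilise $\eta'$. These remain legal here, because adding the non-negative mass $u\delta_x$ only raises heights, so every site that reaches height~$\geq 1$ during the stabilisation of $\eta'$ still has height~$\geq 1$ in the presence of the extra mass. This brings us to $\mathcal{S}\eta' + u\delta_x$, and we then continue toppling to stability. Since, by part~(2) of Proposition~\ref{proposition1}, the resulting stable configuration does not depend on the order in which the legal topplings are carried out, both sides equal $\mathcal{S}(\eta' + u\delta_x)$.

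Finally, set $\eta'' = \eta' + u\delta_x$. Because $u \geq 0$ and $\eta'(y) \geq 1$ for all $y$, we have $\eta''(y) \geq 1$ for all $y \in \Lambda$ as well. Hence $A_x^u\eta = \mathcal{S}\eta''$ is reachable, so $A_x^u\eta \in \mathcal{R}' = \mathcal{R}$, which is exactly the desired closure. The only point requiring genuine care is the abelian composition identity of the third step; everything else is immediate from the definitions and from Theorem~\ref{fullconfiguration}.
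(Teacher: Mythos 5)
Your proof is correct and follows essentially the same route as the paper: both reduce the claim to the characterisation $\mathcal{R}=\mathcal{R}'$ from Theorem~\ref{fullconfiguration}, write $\eta=\mathcal{S}\eta'$ with $\eta'\geq 1$ everywhere, and use the abelian identity $\mathcal{S}(\mathcal{S}\eta'+u\delta_x)=\mathcal{S}(\eta'+u\delta_x)$ to exhibit $A_x^u\eta$ as reachable. The only difference is that you spell out the justification of that identity via Proposition~\ref{proposition1}, where the paper simply invokes the abelian property; your added detail is accurate.
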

\begin{proof}
For $\eta\in\mathcal{R}$, by Theorem~\ref{fullconfiguration}, there is a $\eta'\in\mathcal{X}$ with $\eta'(x)\geq 1$ for all $x\in\Lambda$, such that
$$
\eta=\mathcal{S}\eta'.
$$
By the abelian property of toppling operators,
$$
A_x^u\eta=\mathcal{S}(\mathcal{S}\eta'+u\delta_x)=\mathcal{S}(\eta'
+u\delta_x)
$$
and $\eta'+u\delta_x$ is  a configuration with only unstable sites.
It follows that $A_x^u\eta\in\mathcal{R}'$,
and hence $A_x^u\eta\in\mathcal{R}$.
\end{proof}

\begin{lemma}(1) $\Vol(\mathcal{R})=\det(\Delta)$;\\
(2) For every $\xi\in\mathcal{R}^o$,
$\mu(C(\xi))=\frac{1}{|\mathcal{R}^o|}$.
\end{lemma}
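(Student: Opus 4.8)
The plan is to compute $\Vol(\mathcal{R})$ directly by exploiting the partition of~$\Omega$ into the cells $C(\xi)$ together with the characterization of allowed configurations via their integer parts. First I would observe that for each $\xi\in\Omega^o$ the cell $C(\xi)$ is a hypercube contained in~$\Omega$. Indeed, $\overline{\eta}=\xi$ means $\lfloor 2d\eta(x)\rfloor=\xi(x)$, i.e.\ $\eta(x)\in[\frac{\xi(x)}{2d},\frac{\xi(x)+1}{2d})$ for every $x\in\Lambda$; since $\xi\in\Omega^o$ forces $\xi(x)\leq 2d-1$, we get $\eta(x)<1$, so $C(\xi)\subseteq\Omega$ and
$$
C(\xi)=\prod_{x\in\Lambda}\left[\tfrac{\xi(x)}{2d},\tfrac{\xi(x)+1}{2d}\right),
$$
a box of volume $(2d)^{-|\Lambda|}$, \emph{independent of}~$\xi$.

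Next, by Lemma~\ref{relation2} a stable configuration~$\eta$ is allowed precisely when $\overline{\eta}$ is BTW-allowed, so that $\mathcal{R}=\bigcup_{\xi\in\mathcal{R}^o}C(\xi)$, a disjoint union since the cells $C(\xi)$ partition~$\Omega$. Summing the equal cell volumes yields
$$
\Vol(\mathcal{R})=|\mathcal{R}^o|\,(2d)^{-|\Lambda|},
$$
and part~(2) is then immediate: $\mu(C(\xi))=\Vol(C(\xi))/\Vol(\mathcal{R})=1/|\mathcal{R}^o|$.

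It remains to identify $|\mathcal{R}^o|\,(2d)^{-|\Lambda|}$ with $\det(\Delta)$. Here I would invoke the classical enumeration for the BTW model (see~\cite{meester}), namely that the number of recurrent configurations equals $\det(\Delta^o)$, where $\Delta^o$ is the BTW toppling matrix with $\Delta^o(x,x)=2d$, $\Delta^o(x,y)=-1$ for $|x-y|=1$, and $0$ otherwise. Comparing entrywise with the definition of~$\Delta$ shows $\Delta^o=2d\,\Delta$, whence $\det(\Delta^o)=(2d)^{|\Lambda|}\det(\Delta)$. Substituting $|\mathcal{R}^o|=\det(\Delta^o)$ gives $\Vol(\mathcal{R})=(2d)^{|\Lambda|}\det(\Delta)\,(2d)^{-|\Lambda|}=\det(\Delta)$, which is part~(1).

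There is no serious obstacle in this argument. The two points that require care are merely the bookkeeping that $C(\xi)\subseteq\Omega$ for $\xi\in\Omega^o$ (guaranteed by $\xi(x)\leq 2d-1$) and the verification of the scalar identity $\Delta^o=2d\,\Delta$. The only substantive input is external---the count $|\mathcal{R}^o|=\det(\Delta^o)$ of recurrent BTW configurations, which follows from Dhar's matrix-tree formula---and I would cite it rather than reprove it.
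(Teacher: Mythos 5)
Your proof is correct and follows essentially the same route as the paper: decompose $\mathcal{R}$ into the cells $C(\xi)$ for $\xi\in\mathcal{R}^o$, each of volume $(2d)^{-|\Lambda|}$, invoke $|\mathcal{R}^o|=\det(\Delta^o)$ from the BTW theory, and use $\Delta^o=2d\Delta$ to conclude. Your version merely spells out a few details (the explicit box description of $C(\xi)$ and the role of Lemma~\ref{relation2}) that the paper leaves implicit.
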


\begin{proof}
(1) We have
$$
\Vol(\mathcal{R})=\sum_{\xi\in\mathcal{R}^o}\Vol(C(\xi)).
$$
For each $\xi\in\mathcal{R}^o$,
$\Vol(C(\xi))=(2d)^{-|\Lambda|}$, hence
$\Vol(\mathcal{R})=|\mathcal{R}^o|(2d)^{-|\Lambda|}$.
In the BTW-sandpile model, it is well known that
$|\mathcal{R}^o|=\det(\Delta^o)$,  where $\Delta^o$ is the
toppling matrix, see \cite[Theorem~4.3]{meester}. Since $\Delta^o=2d\Delta$,  we have
$$
\Vol(\mathcal{R})=(2d)^{|\Lambda|}\det(\Delta)(2d)^{-|\Lambda|}=\det(\Delta).
$$
(2) is immediate from the definitions.
\end{proof}

\section{Proof of Theorem~\ref{invariant}}
 
We denote the sites in~$\Lambda$ by $x_1, x_2, \ldots,
x_{|\Lambda|}$ and define the collection~$\mathcal{L}$ as
$$
\mathcal{L} = \Bigl\{ [c, d)=\prod_{1\leq k\leq |\Lambda|}[c_k,d_k) : 0\leq c_k\leq d_k \leq \frac{1}{2d} \Bigr\},
$$
where $c=(c_1,\ldots, c_{|\Lambda|})$ and $d=(d_1,\ldots, d_{|\Lambda|})$. 
Note that $\mathcal{L}$ is a $\pi$-system.
For $\xi \in \mathcal{R}^o$, let
$$
C(\xi, [c, d))=\{\eta\in C(\xi): \tilde{\eta} \in [c, d)\},
$$
which is the set of configurations  whose integer part is
$\xi$ and whose fractional part is in the interval $[c, d)$.
Let 
\begin{equation}
\mathcal{I}=\{C(\xi, [c, d)): [c, d)\in \mathcal{L}, \xi\in\mathcal{R}^o \}.
\label{pi-system}
\end{equation}
$\mathcal{I}$ is also a $\pi$-system. In order to show that $\mu$ is $A_x^u$-invariant, it 
suffices to show that
\begin{equation}
\mu\{\eta: A_x^u\eta\in B\}=\mu(B)
\end{equation}
for all $ B\in\mathcal{I}$,
and we will do that by direct calculation.

Let $B=C(\xi, [c, d))$ and $\zeta\in B$, and define the configuration~$\eta$ 
by
\begin{equation}
	\tilde{\eta}(z) = \bigl( \zeta(z)-u\delta_x(z) \bigr) \bmod \frac1{2d}, 
	\qquad z\in\Lambda,
	\label{etatilde}
\end{equation}
and
\begin{equation}
	\overline{\eta} = \begin{cases}
		(a_x^{-1})^{\lfloor 2du \rfloor}\xi
		& \text{if $\tilde{\zeta}(x) \geq u\bmod \frac1{2d}$}; \\
		(a_x^{-1})^{\lfloor 2du\rfloor + 1}\xi
		& \text{if $\tilde{\zeta}(x) < u\bmod \frac1{2d}$}.
	\end{cases}
	\label{etabar}
\end{equation}
We claim that this $\eta$ is the unique $\eta \in \mathcal{R}$ such that 
$A_x^u\eta = \zeta$. To see this, first note that for any $\eta \in 
\mathcal{R}$, $\tilde{\eta}$ differs from $\widetilde{A_x^u\eta}$ only at the 
site~$x$, and that
\[ \widetilde{A_x^u\eta}(x) = (\eta(x)+u) \bmod \frac1{2d}. \]
This shows that for any $\eta\in\mathcal{R}$ such that $A_x^u\eta=\zeta$, its   
fractional part is given by~\eqref{etatilde}. Now there are two 
possibilities:
\begin{enumerate}
\item $(\widetilde{A_{x}^u\eta})(x) = \tilde{\zeta}(x) \geq 
	u\bmod\frac{1}{2d}$;
\item $(\widetilde{A_{x}^u\eta})(x) = \tilde{\zeta}(x) < u\bmod\frac{1}{2d}$.
\end{enumerate}
A little algebra reveals that in the first case,
\[
	\overline{A_{x}^u\eta} = 
	\mathcal{S}^o(\overline{\eta}+\lfloor2du\rfloor\delta_{x}) = 
	a_x^{\lfloor2du\rfloor}\overline{\eta},
\]
and in the second case,
\[
	\overline{A_{x}^u\eta} = 
	\mathcal{S}^o(\overline{\eta}+(\lfloor2du\rfloor+1)\delta_{x}) = 
	a_x^{\lfloor2du\rfloor+1}\overline{\eta}.
\]
In words, depending on the fractional part of~$\zeta$ at site~$x$, the 
addition of~$u$ to site~$x$ corresponds to either adding $\lfloor 2du \rfloor$ 
or $\lfloor 2du \rfloor +1$ particles at~$x$ in the BTW model. It follows that 
any $\eta\in\mathcal{R}$ such that $A_x^u\eta=\zeta$ must have integer part 
given by~\eqref{etabar}. We conclude that for every $\zeta\in B$, the $\eta$ 
defined by \eqref{etatilde} and~\eqref{etabar} is the unique $\eta \in 
\mathcal{R}$ such that $A_x^u\eta = \zeta$. It follows that $A_x^u$ is a bijection.

Next we show that $A_x^u$ preserves the measure $\mu$.
For this, note that the inverse image of~$B$ naturally partitions into two sets: if 
$x=x_i$, define $[c^1, d^1)$, $[c^2, d^2)$ to be intervals that differ from 
$[c, d)$ in the $i^{th}$~coordinate only, to the effect that $[c_{i}^1, 
d_{i}^1) = [c_{i}, d_{i}) \cap [u\bmod\frac{1}{2d}, \frac{1}{2d})$ and 
$[c_{i}^2, d_{i}^2) = [c_{i}, d_{i}) \cap [0, u\bmod\frac{1}{2d})$. Note that
$[c, d) = [c^1, d^1)\cup [c^2, d^2)$. From the above we have
\[
	\{\eta: A_x^u\eta\in C(\xi, [c^1, d^1))\}
	= C\bigl( (a_x^{-1})^{\lfloor2du\rfloor}\xi, [c', d') \bigr)
\]
and 
\[
	\{\eta: A_x^u\eta\in C(\xi, [c^2, d^2))\}
	= C\bigl( (a_x^{-1})^{\lfloor2du\rfloor+1}\xi, [c'', d'') \bigr),
\]
where $[c', d')$ and $[c'', d'')$ are intervals that differ from $[c,d)$ only 
in the $i^{th}$~coordinate to the effect that $[c'_{i}, d'_{i}) = [c^1_{i} - u 
\bmod \frac{1}{2d}, d^1_{i} - u \bmod \frac{1}{2d})$ and $[c''_{i}, d''_{i}) = 
[c_{i}^2 + \frac{1}{2d} - u \bmod \frac{1}{2d}, d_{i}^2 + \frac{1}{2d} - u 
\bmod \frac{1}{2d})$. Finally,
\[
	\Vol\bigl\{ C\bigl( (a_x^{-1})^{\lfloor2du\rfloor}\xi, [c', d') \bigr) 
	\bigr\}	= \Vol\bigl\{ C\bigl(\xi, [c^1, d^1)\bigr) \bigr\}
\]
and
\[
	\Vol\bigl\{ C\bigl( (a_x^{-1})^{\lfloor2du\rfloor+1}\xi, [c'', d'') \bigr) 
	\bigr\}	= \Vol\bigl\{ C\bigl(\xi, [c^2, d^2)\bigr) \bigr\},
\]
which implies that $\mu$ is $A_x^u$-invariant.
\qed

\section{Proof of Theorem~\ref{a<b}}

We prove Theorem~\ref{a<b} with a coupling.
Let $\eta, \zeta\in\Omega_\Lambda$ be two initial configurations and let $\eta_t,
\zeta_t$ be two copies of the CBTW sandpile starting from
$\eta, \zeta$ respectively (hence $\eta=\eta_0$ and $\zeta = \zeta_0$). 
Addition amounts at time~$t$
are $U_t^\eta, U_t^\zeta$ respectively , the addition sites are
$X_t^\eta, X_t^\zeta$ respectively. All these random quantities are independent of each other. In the proof, we 
will couple the two processes, and in the coupling, random variables will be written in `hat'-notation; see below.

For every $x\in\Lambda$ and $t=0,1,\ldots$, let $D_t(x)$ be defined as
\begin{equation}
D_t(x)=\frac{1}{\lceil\frac{4}{b-a}\rceil}\big(\eta_t(x)-\zeta_t(x)\big).
\end{equation}

We now define a coupling of the CBTW realisations starting from $\eta$ and~$\zeta$ respectively. The sites to which we add are
copied from the $\eta$-process, that is, we define
\begin{equation}
\hat{X}_t^\eta=\hat{X}_t^\zeta=X_t^\eta
\end{equation}
and write $X_t$ (without any superscript) for the common value.
The addition amount in the $\eta$-process is unchanged, that is, 
$$
\hat{U}_t^\eta=U_t^\eta,
$$
but in the $\zeta$-process we define
$$
\hat{U}_t^\zeta=[U_t^\eta+D_0(X_t)-a] \bmod (b-a)+a.
$$
Hence, in the coupling the $\eta$-process evolves as in the original version,  but the $\zeta$-process does not. It is not hard to see
that this definition gives the correct marginals and that if $U_t^{\eta}\in[\frac{3a+b}{4}, \frac{a+3b}{4}]$,
then
\begin{equation}
\hat{U}_t^\zeta=U_t^\eta+\frac{1}{\lceil\frac{4}{b-a}\rceil}(\eta_0(X_t)-\zeta_0(X_t)).
\end{equation}

Let us now say that event $\mathcal{O}$ occurs if
\begin{enumerate}
\item between times $1$ and $1 + |\Lambda| \lceil\frac{4}{b-a}\rceil$ (inclusive), all sites are chosen as addition sites exactly $\lceil\frac{4}{b-a}\rceil$ times;
\item between times $1$ and $1 + |\Lambda| \lceil\frac{4}{b-a}\rceil$ (inclusive), the addition amounts in the $\eta$-process are all
contained in the interval $[\frac{3a+b}{4}, \frac{a+3b}{4}]$.
\end{enumerate}

If $\mathcal{O}$ occurs, then we claim that at time $1 + |\Lambda| \lceil\frac{4}{b-a}\rceil$, the two processes are in the same state.
Indeed, by the abelian property we can obtain the configuration at time~$t$ by first adding all additions up to time~$t$, and after that topple
all unstable sites in any order. If $\mathcal{O}$ occurs, and we defer toppling to the very end, all heights will be the same in the two
processes by construction, and hence after toppling they remain the same. 

The probability that $\mathcal{O}$ occurs is uniformly bounded below, that is, uniformly in the initial configurations $\eta$ and~$\zeta$. Indeed,
all that is necessary is that all sites are equally often addition sites, and that the addition amounts are in the correct subinterval of $[a,b]$. These
two requirements are independent of the starting configurations. Of course, the evolution of the coupling does depend on the initial configurations via
the relations between the added amounts. That is where the subtlety of the present coupling lies. 

Hence, to finish our construction, we first see whether or not $\mathcal{O}$ occurs. If it does we are done. If it does not, we start all over
again, with the current configurations at time~$t$ as our new initial configurations, and $D_t(x)$ instead of $D_0(x)$. Continuing this way, we have a fixed positive probability
for success at each trial and therefore the two processes will almost surely be equal eventually. Due to the fact that the success probability
is uniformly bounded below, convergence will be exponentially fast. The result 
now follows by choosing $\eta$ and~$\zeta$ according to the distributions 
$\nu$ and~$\mu$, respectively.
\qed

\section{Proof of Theorems \ref{a=b} and~\ref{a=l/2d}}

We start with the proof of Theorem~\ref{a=b}.

\begin{proof}[Proof of Theorem~\ref{a=b}]
	Define the function~$g$ by
	\[
		g(\eta) = \exp\biggl(4d\pi i \sum_{x \in \Lambda} \eta(x)\biggr).
	\]
	It is easy to see that $g$ is continuous and, of course, bounded. Denoting 
	the distribution of the process at time~$t$ by~$\nu_t$, for $\nu_t$ to 
	converge weakly to~$\nu$, say, it must be the case that
	\begin{equation}
		\label{ff}
		\int g d\nu_t \to \int g d\nu.
	\end{equation}
	However, since at each iteration of the process, exactly one of the 
	fractional parts is increased by~$a$ and subsequently taken modulo $1/2d$, 
	we have that $\sum_{x \in \Lambda} \eta_t(x) \bmod 1/2d$ is $\nu_t$-surely 
	equal to $\bigl( \sum_{x \in \Lambda} \eta_0(x) + ta \bigr) \bmod 1/2d$, 
	and hence the sequence of integrals on the left of~\eqref{ff} does not 
	converge at all, unless $a$ is a multiple of $1/2d$.
\end{proof}

We now move on to the proof of Theorem~\ref{a=l/2d}. In order to describe the 
limiting measure~$\mu^\eta_a$ appearing in the statement of the theorem, we 
first introduce the following notation. For a measure~$\nu$ on the space of 
height configurations~$\mathcal{X}$, $\mathcal{S}\nu$ denotes the measure on 
the stable configurations~$\Omega$ defined by
\[ \mathcal{S}\nu(B) = \nu\{\eta\in\mathcal{X}: \mathcal{S}\eta\in B\}, \]
for every measurable set $B\subset\Omega$.

\begin{proof}[Proof of Theorem~\ref{a=l/2d}]
	When $a=0$, $\eta_t=\eta$ for all $t=1,2,\dotsc$, and the limiting 
	distribution $\mu^\eta_0$ is point mass at~$\eta$. 

	So assume $l>0$ and take $a=l/2d$. We first consider the case in which 
	$\eta(x)=0$ for all sites $x$, and proceed by a coupling between the BTW 
	and the CBTW model. To introduce this coupling, let $(X_t)$ denote a 
	common sequence of (random) addition sites, and define
	\[
		\theta_t   = \sum_{s=1}^t a\delta_{X_s}, \qquad
		\theta^o_t = \sum_{s=1}^t \delta_{X_s}.
	\]
	This couples the vector~$\theta_t$ of all additions until time~$t$ in the 
	CBTW model, with the vector~$\theta^o_t$ of all additions until time~$t$ 
	in the BTW model.

	In order to arrive at the BTW configuration at time~$t$, now we can first 
	apply all additions, and then topple unstable sites as long as there are 
	any. In doing so, we couple the topplings in the BTW model with those in 
	the CBTW model: if we topple a site~$x$ in the BTW model, we topple the 
	corresponding site in the CBTW model $l=2da$ times. If we topple at~$x$, 
	then in the BTW model, $x$ loses $2d$~particles, while all neighbours 	
	receive~1. In the CBTW model, $x$ loses total mass~$2da$ while all its 
	neighbours receive~$a$. Hence, the dynamics in the CBTW model is exactly 
	the same as in the BTW model, but multiplied by a factor of~$a$. In 
	particular, since all the topplings in the BTW model were legal, all the 
	topplings in the CBTW model must have been legal as well. Furthermore, 
	when the BTW model has reached the stable configuration $\kappa_t = 
	\mathcal{S}^o \theta^o_t$, then the corresponding configuration in the 
	CBTW model is simply $a\kappa_t$. However, $a\kappa_t$ need not be stable 
	in the CBTW model. Hence, in order to reach the CBTW configuration at 
	time~$t$, we have to stabilize $a\kappa_t$, leading to the stable 
	configuration $\rho_t = \mathcal{S} \theta_t$.

	From the results in~\cite{meester}, we know that the distribution 
	of~$\kappa_t$ converges exponentially fast (as $t \to \infty$) in total 
	variation to the uniform distribution on~$\mathcal{R}^o$. Hence, $\rho_t$ 
	is exponentially close to~$\mathcal{S} \nu_a$, where $\nu_a$ is the 
	uniform measure on the set $\{a\xi: \xi \in \mathcal{R}^o\}$.
	
	This settles the limiting measure in the case where we start with the 
	empty configuration. If we start with configuration~$\eta$ in the CBTW 
	model, we may (by abeliannes) first start with the empty configuration as 
	above---coupled to the BTW model in the same way---and then add the 
	`extra' $\eta$ to~$\rho_t$ at the end, and stabilize the obtained 
	configuration. It follows that the height distribution in the CBTW model 
	converges to $\mathcal{S} \nu_a^{\eta}$, where $\nu_a^{\eta}$ is the 
	uniform measure on the set $\{\eta + a\xi: \xi \in \mathcal{R}^o\}$.
\end{proof}

\section{Proof of Theorem~\ref{irrationala}}

The proof of Theorem~\ref{irrationala} is the most involved. It turns out that 
the viewpoint of random ergodic theory is very useful here, and we start by 
reformulating the sandpile in this framework; see~\cite{Ki} for a review of 
this subject.

We consider the CBTW with $a=b$, that is, with non-random additions at a 
randomly chosen site. As before, we denote by~$\mu$ the uniform measure 
on~$\mathcal{R}$. For every $x \in \Lambda$, we have a transformation $A_x^a: 
\mathcal{R} \to \mathcal{R}$ which we denote in this section by~$A_x$ (since 
$a$ is fixed). Recall that each~$A_x$ is a bijection by 
Theorem~\ref{invariant}. The system evolves by each time picking one of 
the~$A_x$ uniformly at random (among all $A_x$, $x \in \Lambda$) independently 
of each other. The product measure governing the choice of the subsequent 
transformations is denoted by~${\bf p}$, that is, ${\bf p}$ assigns 
probability $1/|\Lambda|$ to each transformation, and makes sure that 
transformations are chosen independently. The system has randomness
in two ways: an initial distribution~$\nu$ on~$\mathcal{R}$ and the choice of 
the transformations. To account for this we sometimes work with the product 
measure $\nu \times {\bf p}$. 

A probability measure~$\nu$ on~$\mathcal{R}$ is called {\em invariant} if
\[
	\nu(B)= \frac{1}{|\Lambda|} \sum_{x \in \Lambda} \nu(A_x^{-1}B),
\]
that is, $\nu$ preserves measure on average, not necessarily for each 
transformation individually. We call a bounded function~$g$
on~$\mathcal{R}$ $\nu$-\emph{invariant} if
\[
	\frac{1}{|\Lambda|} \sum_{x \in \Lambda} g \circ A_x(\eta) = g(\eta),
\]
for $\nu$-almost all $\eta \in \mathcal{R}$. We call a (measurable) subset~$B$ 
of~$\mathcal{R}$ invariant if its indicator function is a $\nu$-almost 
everywhere invariant function; this boils down to the requirement that
up to sets of $\nu$-measure~0, $B$ is invariant under each of the 
transformations~$A_x$ individually. Finally, we call an invariant probability 
measure~$\nu$ on~$\mathcal{R}$ {\em ergodic} if any $\nu$-invariant function 
is a $\nu$-a.s.\ constant. These definitions extend the usual definitions in 
ordinary (non-random) ergodic theory. It is well known and not hard to show 
(see \cite[Lemma~2.4]{Ki}) that $\nu$ is ergodic if and only if every 
invariant set has $\nu$-probability zero or one. As before, we denote by 
$\bar{\eta}$ and~$\tilde{\eta}$ the integer and fractional parts of the 
configuration~$\eta$.

\begin{lemma}
	\label{invariantsets}
	Let $\lambda$ be an invariant probability measure, $B$ a 
	$\lambda$-invariant set and $C$ a set such that $\lambda(C\diff B) = 0$. 
	Then $\lambda(A_x^{-1}C\diff B) = 0$ for all $x\in\Lambda$.
\end{lemma}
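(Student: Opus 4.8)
The plan is to exploit the fact that the symmetric difference interacts well with both preimages and null sets, so that the conclusion reduces to two facts already available. I would start from the observation that taking preimages commutes with symmetric differences, i.e.\ $A_x^{-1}C \diff A_x^{-1}B = A_x^{-1}(C\diff B)$ for each $x\in\Lambda$, together with the elementary triangle-type inclusion
\[
	A_x^{-1}C \diff B \subseteq \bigl(A_x^{-1}C \diff A_x^{-1}B\bigr) \cup \bigl(A_x^{-1}B \diff B\bigr) = A_x^{-1}(C\diff B) \cup \bigl(A_x^{-1}B \diff B\bigr).
\]
It then suffices to show that each of the two sets on the right has $\lambda$-measure zero.

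The second set is immediate: since $B$ is $\lambda$-invariant, $\lambda(A_x^{-1}B \diff B) = 0$ for every $x\in\Lambda$ by the very definition of an invariant set.

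For the first set, the idea is to use the (only average) invariance of~$\lambda$. Writing $E = C\diff B$, the hypothesis gives $\lambda(E) = 0$, and invariance then yields
\[
	0 = \lambda(E) = \frac{1}{|\Lambda|}\sum_{y\in\Lambda}\lambda(A_y^{-1}E).
\]
Because every summand is nonnegative, each one must vanish; in particular $\lambda(A_x^{-1}(C\diff B)) = 0$. Combining the two bounds with the displayed inclusion gives $\lambda(A_x^{-1}C \diff B) = 0$, which is the claim.

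The one step that deserves care---though it is brief---is the passage from average invariance to the vanishing of each individual preimage measure $\lambda(A_x^{-1}E)$. One cannot simply invoke $\lambda(A_x^{-1}E) = \lambda(E)$, since $\lambda$ need not be preserved under any single transformation~$A_x$; the argument works only because a sum of nonnegative numbers vanishes exactly when every term does. The remaining manipulations are purely set-theoretic and routine.
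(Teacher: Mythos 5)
Your proof is correct and follows essentially the same route as the paper's: both arguments reduce $A_x^{-1}C \diff B$ to the two null sets $A_x^{-1}B \diff B$ (from the invariance of~$B$) and $A_x^{-1}(C\diff B)$ (from average invariance of~$\lambda$ plus nonnegativity of the summands), the paper doing so by an explicit element-wise case analysis that amounts to your triangle inclusion. Your version is, if anything, slightly cleaner in making the ``average invariance forces each preimage of a null set to be null'' step explicit.
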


\begin{proof}
	From the assumptions and invariance of~$\lambda$, it follows that the sets 
	$A_x^{-1}B \diff B$, $A_x^{-1}(B\setminus C)$ and $A_x^{-1}(C\setminus B)$ 
	all have $\lambda$-measure~0. Now suppose $\omega \in A_x^{-1}C\setminus 
	B$. Then either $\omega \in A_x^{-1}B\setminus B$ or else $\omega \in 
	A_x^{-1}(C\setminus B)$. Since both these sets have $\lambda$-measure~0, 
	$\lambda(A_x^{-1}C\setminus B) = 0$. Next suppose that $\omega \in 
	B\setminus A_x^{-1}C$. Then either $\omega \in B\setminus A_x^{-1}B$ or 
	else $\omega \in A_x^{-1}(B\setminus C)$. Again, since both these sets 
	have $\lambda$-measure~0, $\lambda(B\setminus A_x^{-1}C) = 0$.
\end{proof}

A version of the following lemma is well known in ordinary ergodic theory,
see e.g.\ \cite[Proposition~5.4]{denker}.

\begin{lemma}
	\label{abscont}
	Let $\nu$ be an ergodic probability measure (in our sense) and let 
	$\lambda$ be an invariant probability measure which is absolutely 
	continuous with respect to~$\nu$. Then $\lambda$ is ergodic, and therefore 
	$\lambda=\nu$.
\end{lemma}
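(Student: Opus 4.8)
The plan is to pass to the Markov operator $Pg = \frac1{|\Lambda|}\sum_{x\in\Lambda} g\circ A_x$ governing the random dynamics. Its stationary measures ($\nu P=\nu$, where $\nu P(B)=\frac1{|\Lambda|}\sum_x\nu(A_x^{-1}B)$) are precisely the invariant measures of the lemma, so I want to show that the Radon--Nikodym density $f=d\lambda/d\nu$ is $\nu$-a.e.\ constant. Once $f$ is constant it equals $\int f\,d\nu=1$, giving $\lambda=\nu$, and the ergodicity of $\lambda$ then follows from that of $\nu$.

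The difficulty, and the reason one cannot simply copy the classical single-transformation proof, is that the individual bijections $A_x$ need not preserve $\nu$: invariance holds only on average. The observation that repairs this is that, since $\nu=\frac1{|\Lambda|}\sum_x (A_x)_*\nu$ is a sum of nonnegative measures, each pushforward $(A_x)_*\nu$ (the measure $B\mapsto\nu(A_x^{-1}B)$) satisfies $(A_x)_*\nu\le|\Lambda|\,\nu$ and hence has a bounded density $h_x=d(A_x)_*\nu/d\nu$. This lets me introduce the time-reversed operator $\check P g=\frac1{|\Lambda|}\sum_x (g\circ A_x^{-1})\,h_x$, which one checks is the $L^2(\nu)$-adjoint of $P$ (i.e.\ $\int (Pg)\phi\,d\nu=\int g\,(\check P\phi)\,d\nu$), is again Markov ($\check P\mathbf 1=\mathbf 1$), and preserves $\nu$-integrals. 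A short computation using adjointness and the invariance of both $\lambda$ and $\nu$ then yields that $f$ is $\check P$-harmonic, $\check P f=f$ $\nu$-a.e.

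To pass from harmonicity to constancy I truncate. For each $t>0$ the bounded function $f\wedge t$ is $\check P$-superharmonic by Jensen's inequality for the Markovian $\check P$, that is $\check P(f\wedge t)\le f\wedge t$; since $\check P$ preserves $\nu$-integrals, equality of integrals forces $\check P(f\wedge t)=f\wedge t$ $\nu$-a.e. Now I use the elementary Hilbert-space fact that a contraction on $L^2(\nu)$ and its adjoint have the same fixed vectors (here $\|Pg\|_{L^2(\nu)}\le\|g\|$ because $\tfrac1{|\Lambda|}\sum_x h_x=1$): from $\check P(f\wedge t)=f\wedge t$ I also get $P(f\wedge t)=f\wedge t$. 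Comparing $P\bigl((f\wedge t)^2\bigr)$ with $\bigl(P(f\wedge t)\bigr)^2$ (both equal $\nu$-a.e., since $P$ too preserves $\nu$-integrals) shows the conditional variance of $f\wedge t$ along the transition vanishes, so $(f\wedge t)\circ A_x=f\wedge t$ $\nu$-a.e.\ for each $x$. The level sets of $f\wedge t$ are therefore invariant sets, hence $\nu$-trivial by ergodicity, so $f\wedge t$ is $\nu$-a.e.\ constant for every $t$; letting $t\to\infty$ shows $f$ is constant, as desired.

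The main obstacle is precisely the step carried out in the second paragraph: circumventing the failure of each $A_x$ to preserve $\nu$ by moving to the reversed Markov operator $\check P$, verifying its properties, and establishing $\check P f=f$. I should also remark on an alternative, more set-theoretic route that dispenses with densities: using Lemma~\ref{invariantsets} one upgrades a $\lambda$-invariant set to a $\nu$-invariant set agreeing with it up to $\lambda$-null sets, deduces ergodicity of $\lambda$ directly, and then obtains $\lambda=\nu$ from the random Birkhoff ergodic theorem of~\cite{Ki} applied to the two ergodic measures; I find the density argument above cleaner and more self-contained, but either can be made to work.
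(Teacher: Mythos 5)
Your argument is correct, but it takes a genuinely different route from the paper's. The paper works at the level of sets: given a $\lambda$-invariant set~$B$, it forms the strictly invariant ``tail'' set $C=\bigcap_{n}\bigcup_{u\in\mathbb{Z}_{\geq n}^\Lambda}T_u^{-1}B$ (a step that uses the commutativity of the $A_x$ in an essential way), shows $\lambda(C\diff B)=0$ by repeated application of Lemma~\ref{invariantsets}, concludes that $\lambda$ is ergodic, and only then obtains $\lambda=\nu$ by passing to the skew product via Theorem~2.1 of~\cite{Ki} and invoking the mutual singularity of distinct ergodic measures. You instead work with the density $f=d\lambda/d\nu$ and the Markov operator~$P$ together with its $\nu$-adjoint~$\check P$; your key observation that $(A_x)_*\nu\leq|\Lambda|\,\nu$, so that the reversed kernel has bounded densities~$h_x$ averaging to one, is exactly what is needed to make the classical ``harmonic density is constant'' argument survive the failure of each individual~$A_x$ to preserve~$\nu$, and the truncation combined with the contraction/adjoint fixed-point lemma correctly handles the fact that $f$ is a priori only in $L^1(\nu)$. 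Your route buys two things: it nowhere uses the abelian property of the~$A_x$, so it proves the statement for an arbitrary finite family of measurable bijections preserving~$\nu$ on average, and it is self-contained, avoiding the appeal to Kifer's skew-product theorem; it also reverses the logical order, obtaining $\lambda=\nu$ first and ergodicity of~$\lambda$ as a trivial consequence, whereas the paper must establish ergodicity of~$\lambda$ as the intermediate step. The alternative you sketch in your closing paragraph is essentially the paper's proof. One small presentational point: the cleanest justification that $P$ is an $L^2(\nu)$-contraction is $\int|Pg|^2\,d\nu\leq\int P(|g|^2)\,d\nu=\int|g|^2\,d\nu$, i.e.\ Jensen's inequality plus invariance of~$\nu$, rather than the normalization of the~$h_x$ alone.
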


\begin{proof}
	For $u\in \mathbb{Z}_{\geq0}^\Lambda$, write
	\[	T_u := \prod_{x\in\Lambda} A_x^{u(x)}. \]
	By commutativity, the order of the composition is irrelevant. Now suppose 
	that $B$ is a $\lambda$-invariant set. Define
	\[
		C := \bigcap_{n=0}^\infty \bigcup_{u\in\mathbb{Z}_{\geq n}^\Lambda} 
		T_u^{-1}B.
	\]
	In words, $C$ is the set of configurations~$\eta$ such that for all~$n$, 
	there is a $u$ with $u(x)\geq n$ at every $x\in\Lambda$, for which 
	$T_u\eta \in B$. Since $T_u(A_x\eta) = T_{u+\delta_x}\eta$, it is not 
	difficult to see that $A_x\eta\in C$ if and only if $\eta\in C$. Hence 
	$A_x^{-1}C = C$ for all $x\in\Lambda$. In particular, $C$ is a 
	$\nu$-invariant set, so by ergodicity of~$\nu$, either $\nu(C)=0$ or 
	$\nu(C^c)=0$. But since $\lambda$ is absolutely continuous with respect 
	to~$\nu$, this implies that either $\lambda(C)=0$ or $\lambda(C^c)=0$.

	To prove that $\lambda$ is ergodic, it therefore suffices to show that 
	$\lambda(B) = \lambda(C)$, or equivalently, $\lambda(C\diff B)=0$. To this 
	end, we first claim that
	\begin{equation}
		(C\diff B) \subset \bigcup_{u\in\mathbb{Z}_{\geq0}^\Lambda} 
		(T_u^{-1}B\diff B).
		\label{difference}
	\end{equation}
	Indeed, if $\eta\in C\setminus B$, there must be a $u \in 
	\mathbb{Z}_{\geq0}^\Lambda$ such that $T_u\eta \in B$, hence $\eta \in 
	T_u^{-1}B \setminus B$. And if $\eta\in B\setminus C$, then there are only 
	finitely many~$n$ such that $T_u\eta\in B$ with $u(x)=n$ for all 
	$x\in\Lambda$. Hence $\eta\in B\setminus C$ implies that for some $n>0$ we 
	have $T_u\eta \not\in B$ with $u(x)=n$ for all $x\in\Lambda$, and 
	therefore $\eta\in B\setminus T_u^{-1}B$ for this particular~$u$. This 
	establishes~\eqref{difference}.

	To complete the proof, note that by repeated application of 
	Lemma~\ref{invariantsets}, it follows that for every $u \in 
	\mathbb{Z}_{\geq0}^\Lambda$, $\lambda(T_u^{-1}B \diff B) = 0$. Hence 
	$\lambda(C\diff B) = 0$ by~\eqref{difference}, and we conclude that 
	$\lambda$ is ergodic. Now it follows from Theorem~2.1 in~\cite{Ki} that 
	$\nu \times {\bf p}$ and $\lambda \times {\bf p}$ are also ergodic (in the 
	deterministic sense w.r.t.\ the skew product transformation). But since 
	$\lambda$ is absolutely continuous with respect to~$\nu$, $\lambda \times 
	{\bf p}$ is absolutely continuous with respect to~$\nu \times {\bf p}$, 
	and hence must be equal to $\nu \times {\bf p}$ by ordinary (non-random) 
	ergodic theory. It follows that $\nu = \lambda$.
\end{proof}

We will apply Lemma~\ref{abscont} with the uniform measure~$\mu$ in the role 
of~$\nu$. Before we can do so, we must first show that $\mu$ is ergodic, which 
is an interesting result in its own right.

\begin{theorem}
	\label{uniform}
	When $a=b \notin \mathbb{Q}$, the uniform measure~$\mu$ on~$\mathcal{R}$ 
	is ergodic.
\end{theorem}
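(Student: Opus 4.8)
The plan is to use the characterization of ergodicity recalled just before the statement: it suffices to show that every measurable $B\subseteq\mathcal{R}$ with $\mu(A_x^{-1}B\diff B)=0$ for all $x\in\Lambda$ satisfies $\mu(B)\in\{0,1\}$ (here $A_x=A_x^a$). Throughout I would use the coordinates $\eta\leftrightarrow(\overline{\eta},\tilde{\eta})$, which identify $(\mathcal{R},\mu)$ with $\mathcal{R}^o\times\mathbb{T}^\Lambda$, where $\mathbb{T}=\mathbb{R}/\tfrac1{2d}\mathbb{Z}$ carries normalized Lebesgue measure and $\mathcal{R}^o$ the uniform measure; this is legitimate because each $C(\xi)$ is a cube of side $\tfrac1{2d}$ on which $\tilde{\eta}$ ranges over all of $\mathbb{T}^\Lambda$, with $\mu(C(\xi))=1/|\mathcal{R}^o|$. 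The one input I need from the proof of Theorem~\ref{invariant} is the form of $A_x$ in these coordinates: with $\alpha=a\bmod\tfrac1{2d}$ and $m=\lfloor 2da\rfloor$, the map $A_x$ rotates the $x$-th fractional coordinate by $\alpha$ (leaving the others fixed) and sends $\overline{\eta}\mapsto a_x^{\,m+w_x(\tilde{\eta})}\overline{\eta}$, where the carry $w_x(\tilde{\eta})\in\{0,1\}$ equals~$1$ exactly when $\tilde{\eta}(x)\geq\tfrac1{2d}-\alpha$, so that adding $\alpha$ wraps around $\tfrac1{2d}$. The crucial feature is that $w_x$ depends only on the base point $\tilde{\eta}$, not on the fibre $\overline{\eta}$.

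First I would prove that the base dynamics on $\mathbb{T}^\Lambda$, i.e.\ the commuting rotations $\tilde{\eta}\mapsto\tilde{\eta}+\alpha\delta_x$, is ergodic. Expanding an invariant $L^2$ function in the characters $\chi_k$, $k\in\mathbb{Z}^\Lambda$, invariance forces every nonzero coefficient to satisfy $\chi_k(\alpha\delta_x)=1$ for all $x$; since $\chi_k(\alpha\delta_x)=\exp(2\pi i\,k(x)\,2da)$ and $2da$ is irrational (because $a$ is), this forces $k=0$. Hence the base is ergodic.

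Next I would decompose $\mathbf{1}_B$ over the fibre using the characters $\psi$ of the finite sandpile group $G$, for which $\mathcal{R}^o$ is a torsor and $a_x$ acts as translation by a generator $g_x$: write $\mathbf{1}_B(\xi,\tilde{\eta})=\sum_{\psi}\hat b_\psi(\tilde{\eta})\,\psi(\xi)$. Substituting the coordinate description of $A_x$ into $\mathbf{1}_B\circ A_x=\mathbf{1}_B$ and matching coefficients of the linearly independent characters $\psi(\xi)$ — possible precisely because $w_x$ does not depend on $\xi$, so $\psi(a_x^{\,m+w_x}\xi)=\psi(g_x)^{m+w_x}\psi(\xi)$ — yields for every $\psi$ and $x$ the cocycle identity $\hat b_\psi(\tilde{\eta}+\alpha\delta_x)=\psi(g_x)^{-(m+w_x(\tilde{\eta}))}\,\hat b_\psi(\tilde{\eta})$. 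For $\psi=\mathbf{1}$ this says $\hat b_{\mathbf 1}$ is base-invariant, hence a.e.\ constant (so $B$ meets a constant number of fibres over a.e.\ base point); the real content is to show $\hat b_\psi\equiv 0$ for every nontrivial $\psi$, since then $\mathbf{1}_B$ depends on $\tilde{\eta}$ only and base-ergodicity forces $\mu(B)\in\{0,1\}$.

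The main obstacle is exactly this last step: ruling out a nonzero solution of the cocycle equation, i.e.\ showing the finite-group-valued cocycle $x\mapsto\psi(g_x)^{-(m+w_x)}$ is not an $L^\infty$-coboundary over the irrational rotation when $\psi\neq\mathbf{1}$. I would first take moduli: the multiplier has modulus~$1$, so $|\hat b_\psi|$ is base-invariant and hence a.e.\ equal to a constant $r_\psi$. Assuming $r_\psi>0$, pick $x$ with $\zeta:=\psi(g_x)\neq1$ (possible since $G=\langle g_x\rangle$ and $\psi\neq\mathbf{1}$) and iterate in the $x$-direction, obtaining $\hat b_\psi(\tilde{\eta}+n\alpha\delta_x)=\zeta^{-K_n(\tilde{\eta}(x))}\hat b_\psi(\tilde{\eta})$ with $K_n(t)=\lfloor 2dt+2dna\rfloor$. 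The plan is then a recurrence/density argument: taking $n_j=q_j$ the continued-fraction denominators of $2da$, one has $n_j\alpha\to0$ in $\mathbb{T}$, while for a.e.\ fixed $\tilde{\eta}(x)$ the carry count satisfies $K_{q_j}(\tilde{\eta}(x))=p_j$ for all large $j$, and $\zeta^{-p_j}\neq1$ for infinitely many $j$ because $\gcd(p_j,p_{j-1})=1$ prevents the order of $\zeta$ from dividing every $p_j$; passing to a subsequence gives $\zeta^{-p_j}\to\omega\neq1$. Reducing to one dimension by Fubini and applying Lusin's theorem to $\hat b_\psi$ at a Lebesgue density point of its continuity set then yields $\hat b_\psi(\tilde{\eta})=\omega\,\hat b_\psi(\tilde{\eta})$ with $|\hat b_\psi|=r_\psi>0$, forcing $\omega=1$, a contradiction. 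Hence $r_\psi=0$ for all nontrivial $\psi$, and the proof concludes as above. I expect the delicate points to be the bookkeeping in the carry identity $K_n(t)=\lfloor 2dt+2dna\rfloor$ (so that $\zeta^{-K_{q_j}}$ is, for a.e.\ fixed $t$, eventually the constant $\zeta^{-p_j}$) and the density-point step — both being where irrationality of $a$ is used a second, essential time.
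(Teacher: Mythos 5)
Your proposal is correct in its overall strategy and in all the key computations I checked (the skew-product form of $A_x$ with the carry $w_x$ depending only on $\tilde{\eta}$, the cocycle identity for the fibre Fourier coefficients, the carry bookkeeping $K_n(t)=\lfloor 2dt+2dna\rfloor$, and the coprimality of consecutive convergent numerators), but it takes a genuinely different route from the paper. The paper exploits the fact (Proposition~3.1 of the cited sandpile notes) that each BTW addition operator $a_{x_i}$ has finite order $n_i$ on $\mathcal{R}^o$ to \emph{unroll} the entire skew product into a single translation by $a$ on the large torus $\prod_i[0,n_i/2d)$: a surjection $\psi$ maps each subcube onto the cube $C(\xi_k)$ with $\xi_k=\prod_i a_{x_i}^{k_i}\xi^{max}$, the carries are absorbed automatically into the subcube bookkeeping, and $\psi$ intertwines the torus translations with the $A_{x_i}$; one Fourier computation using irrationality of $a$ then finishes the proof. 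Your version keeps the skew product explicit, proves base ergodicity on $\mathbb{T}^\Lambda$, expands over the fibre in characters of the sandpile group, and must then rule out a measurable coboundary for the root-of-unity-valued carry cocycle, which you do by a Denjoy--Koksma/rigidity argument along the denominators $q_j$ of $2da$. The paper's route is shorter and sidesteps the coboundary analysis entirely at the price of the finite-order input; yours is the ``generic'' skew-product argument and yields extra information (the vanishing of all nontrivial fibre Fourier modes), but its last step is the delicate one, exactly as you flag. Two small remarks: the parenthetical ``$G=\langle g_x\rangle$'' should read that $G$ is generated by $\{g_x\}_{x\in\Lambda}$ (the conclusion that $\psi(g_x)\neq 1$ for \emph{some} $x$ is what you need and is what holds); and the Lusin/density-point step can be replaced by the cleaner $L^2$ estimate $\|\hat b_\psi\circ R_{q_j}-\zeta^{-p_j}\hat b_\psi\|_2\to 0$ together with $\|\hat b_\psi\circ R_{q_j}-\hat b_\psi\|_2\to 0$, which gives $|\zeta^{-p_j}-1|\,r_\psi\to 0$ directly and forces $r_\psi=0$ along your subsequence.
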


\begin{proof}
	We denote the sites in~$\Lambda$ by $x_1, x_2, \dotsc, x_m$, and identify 
	a configuration $\eta \in [0,\infty)^\Lambda$ with the point $(\eta(x_1), 
	\dotsc, \eta(x_m))$ in $[0,\infty)^m$. By Proposition~3.1 
	in~\cite{meester}, there exists a $n = (n_1,\dotsc,n_m) \in 
	\mathbb{Z}_{\geq1}^m$ such that for all $i = 1,2,\dotsc,m$,
	\[
		a_{x_i}^{n_i} \xi = \xi, \qquad \forall\xi\in\mathcal{R}^o.
	\]
	Now let $\mathcal{A}$ be the rectangle
	\[
		\Bigl[ 0,\frac1{2d}n \Bigr)
		= \Bigl[ 0,\frac{n_1}{2d} \Bigr) \times \Bigl[ 0,\frac{n_2}{2d} \Bigr) 
		\times \dotsb \times \Bigl[ 0,\frac{n_m}{2d} \Bigr),
	\]
	and denote Lebesgue measure on~$\mathcal{A}$ by~$\lambda$. Write $\tau_i$ for 
	the translation by~$a$ modulo $n_i/2d$ in the $i^{th}$ coordinate 
	direction on~$\mathcal{A}$.

	A subset $D$ of~$\mathcal{A}$ is called $\lambda$-\emph{invariant} if 
	$\lambda(\tau_i^{-1}D) = \lambda(D)$ for all~$\tau_i$. We claim that for any 
	$\lambda$-invariant set~$D$, either $\lambda(D)=0$ or $\lambda(D^c)=0$. Although
    this fact is probably well known, we give a proof for completeness. 
    Write $I_D$ for the indicator function of~$D$, and for 
	$k\in\mathbb{Z}^m$, denote by~$c_k$ the Fourier coefficients of~$I_D$. 
	Then
	\[
		c_k = \int_{\mathcal{A}} I_D(\omega) f_k(\omega) \, d\lambda(\omega)
		= \int_D f_k(\omega) \, d\lambda(\omega),
	\]
	where
	\[
		f_k(\omega) = \prod_{j=1}^m \tfrac{2d}{n_j} e^{2\pi i \frac{2d}{n_j} 
		k_j \omega_j}, \qquad \omega\in\mathcal{A}.
	\]
	Now suppose that $k_j\neq 0$ for some $1\leq j\leq m$. Then, changing 
	coordinates by applying~$\tau_j$, we have that
	\[
		c_k
		= \int_{\tau_j^{-1}D} f_k(\tau_j\omega) \, d\lambda(\tau_j\omega)
		= \int_D f_k(\omega)e^{2\pi i \frac{2d}{n_j}k_j a}\, d\lambda(\omega)
		= c_k \, e^{4d\pi i \frac{k_j}{n_j} a},
	\]
	because Lebesgue measure is invariant under~$\tau_j$ and $D$ is 
	$\lambda$-invariant. But since $a$~is irrational, this implies that 
	$c_k=0$. It follows that $I_D = c_o$ $\lambda$-almost everywhere. Hence 
	$c_o$ is either 0 or~1, so that either $\lambda(D) = 0$ or $\lambda(D^c) = 
	0$.

	Next note that $\mathcal{A}$ is composed of the cubes
	\[
		C_k = \Bigl[ \frac{k_1}{2d},\frac{k_1+1}{2d} \Bigr) \times  \Bigl[ 
		\frac{k_2}{2d},\frac{k_2+1}{2d} \Bigr) \times \dotsb \times  \Bigl[ 
		\frac{k_m}{2d},\frac{k_m+1}{2d} \Bigr),
	\]
	where $k\in\mathbb{Z}^m$ with $0\leq k_i<n_i$ for $i=1,2,\dotsc,m$. Using 
	this fact, we define a map $\psi: \mathcal{A}\to \mathcal{R}$ as follows. 
	If $\omega$ is in the cube~$C_k$, let
	\[
		\xi_k = \prod\nolimits_{i=1}^m a_{x_i}^{k_i} \xi^{max},
	\]
	where $\xi^{max}(x) = 2d-1$ for all $x\in\Lambda$, and set
	\[
		\psi(\omega) := \omega - \frac1{2d}k + \frac1{2d}\xi_k.
	\]
	Thus, $\psi$ simply translates the cube~$C_k$ onto the cube of 
	configurations in~$\mathcal{R}$ whose integer part is~$\xi_k$. Notice that 
	multiple cubes in~$\mathcal{A}$ may be mapped by~$\psi$ onto the same cube 
	in~$\mathcal{R}$, but that $\psi: \mathcal{A} \to \mathcal{R}$ is 
	surjective, because every allowed configuration of the BTW model can be 
	reached from~$\xi^{max}$ after a finite number of additions (and 
	subsequent topplings). Furthermore, it is easy to see that a 
	translation~$\tau_i$ on~$\mathcal{A}$ corresponds to an addition~$A_{x_i}$ 
	on~$\mathcal{R}$, in the sense that
	\begin{equation}
		\label{psicommutes}
		\psi(\tau_i \omega) = A_{x_i} \psi(\omega).
	\end{equation}

	Now suppose that $B\subset\mathcal{R}$ is $\mu$-invariant. Since $\mu$ is 
	normalized Lebesgue measure on~$\mathcal{R}$ and $\lambda$ is Lebesgue 
	measure on~$\mathcal{A}$, we have that
	\[
		\lambda(\psi^{-1}(B)) = \sum_k \lambda(\psi^{-1}(B) \cap C_k)
		= \Vol(\mathcal{R}) \sum_k \mu(B \cap \psi(C_k)),
	\]
	where the sum is over all cubes in~$\mathcal{A}$. Because $\mu( 
	A_{x_i}^{-1}B \diff B) = 0$, this gives
	\[
		\lambda(\psi^{-1}(B))
		= \Vol(\mathcal{R}) \sum_k \mu(A_{x_i}^{-1}B \cap \psi(C_k))
		= \sum_k \lambda(\psi^{-1}(A_{x_i}^{-1}B) \cap C_k).
	\]
	By~\eqref{psicommutes}, we have that
	\[ \psi^{-1}(A_{x_i}^{-1}B)= \tau_i^{-1} \psi^{-1}(B) \]
	and it follows that $\lambda(\psi^{-1}(B)) = \lambda(\tau_i^{-1} 
	\psi^{-1}(B))$, hence $\psi^{-1}(B)$ is a $\lambda$-invariant set. 
	Therefore, either $\lambda(\psi^{-1}(B)) = 0$ or $\lambda( \psi^{-1}(B^c) 
	) = 0$. From the construction it then follows that either $\mu(B)=0$ or 
	$\mu(B^c)=0$. Therefore, $\mu$ is ergodic.
\end{proof}

The next lemma will be used to deal with the evolution of the joint 
distribution of the fractional parts. In order to state it we need a few 
definitions. Consider the unit cube $I_m= [0,1]^m$. Let $a$ be an irrational 
number and let $\theta$ be the measure which assigns mass~$1/m$ to each of the	
points $(a,0,\ldots,0), (0,a,0,\ldots, 0), \ldots, (0,\ldots, 0,a)$. Denote 
by~$\theta^{*n}$ the $n$-fold convolution of~$\theta$, where additions are 
modulo~1. Translation over the vector~$x$ (modulo~1 also) is denoted 
by~$\tau_x$. We define the measure~$\mu_N^x$ by
\[
	\mu_N^x = \frac{1}{N}\sum_{n=0}^{N-1} \theta^{*n}\tau^{-1}_x .
\]
In words, this measure corresponds to choosing $n \in \{0,1,\ldots, N-1\}$ 
uniformly, and then applying the convolution of $n$ independently chosen 
transformations with starting point~$x$. 

\begin{lemma}
	\label{jointfrac}
	For all $x$, 
	$\mu_N^x$ converges weakly to Lebesgue measure on~$I_m$ as $N\to\infty$.
\end{lemma}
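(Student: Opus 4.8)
The plan is to pass to Fourier coefficients on the torus. Since every addition and translation here is taken modulo~1, it is natural to regard all the measures $\theta^{*n}$, $\tau_x$, $\mu_N^x$ and Lebesgue measure as living on the compact group $\mathbb{T}^m = \mathbb{R}^m/\mathbb{Z}^m$, on which weak convergence of probability measures is characterised by the pointwise convergence of all Fourier coefficients (the trigonometric polynomials being uniformly dense in $C(\mathbb{T}^m)$ by Stone--Weierstrass, and a standard $\varepsilon/3$ argument then upgrading convergence on characters to convergence against all continuous functions). For $k \in \mathbb{Z}^m$ write $e_k(y) = e^{2\pi i \langle k,y\rangle}$ and $\hat\nu(k) = \int e_k\,d\nu$. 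It therefore suffices to prove that, for every fixed $k \in \mathbb{Z}^m$,
\[
	\hat\mu_N^x(k) \longrightarrow \mathbf{1}_{\{k=0\}}
	\qquad (N \to \infty),
\]
since the right-hand side is exactly the $k$-th Fourier coefficient of Lebesgue measure on~$I_m$.

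First I would compute the Fourier coefficients of the ingredients. Because $\theta$ places mass $1/m$ on each of the $m$ points whose $j$-th coordinate equals~$a$ and whose other coordinates vanish, pairing such a point with~$k$ yields $e^{2\pi i a k_j}$, so that $\hat\theta(k) = \frac1m\sum_{j=1}^m e^{2\pi i a k_j}$. Convolution turns into multiplication and translation by~$x$ contributes the unimodular factor $e_k(x)$, whence
\[
	\hat\mu_N^x(k)
	= \frac1N \sum_{n=0}^{N-1} \widehat{\theta^{*n}\tau_x^{-1}}(k)
	= e_k(x)\,\frac1N\sum_{n=0}^{N-1} \hat\theta(k)^n .
\]
For $k=0$ we have $\hat\theta(0)=1$ and $e_0(x)=1$, so $\hat\mu_N^x(0)=1$ for every~$N$, matching the target.

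The key step---and the only point at which irrationality of~$a$ enters---is to show that $\hat\theta(k)\neq1$ whenever $k\neq0$. Since $\hat\theta(k)$ is the average of the $m$ unimodular numbers $e^{2\pi i a k_j}$, its modulus is at most~1, and it can equal the value~$1$ only if every $e^{2\pi i a k_j}$ equals~$1$, i.e.\ only if $a k_j \in \mathbb{Z}$ for all~$j$; as $a$ is irrational this forces $k_j=0$ for all~$j$, contradicting $k\neq0$. Hence $z:=\hat\theta(k)$ satisfies $|z|\leq1$ and $z\neq1$, and therefore
\[
	\Bigl|\frac1N\sum_{n=0}^{N-1} z^n\Bigr|
	= \frac1N\,\Bigl|\frac{1-z^N}{1-z}\Bigr|
	\leq \frac{2}{N\,|1-z|} \xrightarrow[N\to\infty]{} 0,
\]
using $|1-z^N|\leq 1+|z|^N\leq2$. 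As $|e_k(x)|=1$, this gives $\hat\mu_N^x(k)\to0$ for every $k\neq0$, and the bound is in fact uniform in the starting point~$x$.

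Combining the two cases yields $\hat\mu_N^x(k)\to\mathbf{1}_{\{k=0\}}$ for all~$k$, which by the reduction above establishes the weak convergence of $\mu_N^x$ to Lebesgue measure on~$I_m$. I expect the main obstacle to be precisely the elementary observation that irrationality of~$a$ excludes $\hat\theta(k)=1$ for $k\neq0$: once this is secured, the decay of the Cesàro average is a one-line geometric-series estimate, and no separate analysis of the subcases $|z|<1$ and $|z|=1$ is required.
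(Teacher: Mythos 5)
Your proposal is correct and follows essentially the same route as the paper: reduce weak convergence to convergence of Fourier coefficients via Stone--Weierstrass, compute $\widehat{\theta^{*n}\tau_x^{-1}}(k)=e_k(x)\,\hat\theta(k)^n$ with $\hat\theta(k)=\frac1m\sum_j e^{2\pi i a k_j}$, observe that irrationality of $a$ forces $\hat\theta(k)\neq1$ for $k\neq0$, and kill the Ces\`aro average with the geometric series. The only cosmetic differences are that the paper expands the convolution explicitly via the multinomial theorem where you invoke the convolution--multiplication rule, and that you spell out in more detail why $\hat\theta(k)=1$ forces $k=0$.
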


\begin{proof}
	The setting is ideal for Fourier analysis. It suffices to prove (using 
	Stone-Weierstrass or otherwise) that
	\[ \int f_k\,d\mu_N^x \to \int f_k\,d\lambda, \]
	where $\lambda$ denotes Lebesgue measure and
	\[ f_k(y) = e^{2\pi i k\cdot y} = e^{2\pi i \sum_{j=1}^m k_jy_j}, \]
	for $k = (k_1,\dotsc,k_m) \in \mathbb{Z}^m$ and $y\in I_m$.

	When $k=(0,\dotsc,0)$, all integrals are equal to~1. For $k \neq 
	(0,\dotsc,0)$, $\int f_k\,d\lambda = 0$, and hence it suffices to prove 
	that for these~$k$,
	\[ \int f_k\,d\mu_N^x \to 0. \]
	 For $n\geq 0$ and $l = (l_1,\dotsc,l_m) \in \mathbb{Z}_{\geq0}^m$ such 
	 that $l_1+\dotsb+l_m = n$,
	\[
		\theta^{*n}\tau_x^{-1}\{x + la\}
		= \frac1{m^n} \binom{n}{l_1,l_2,\dotsc,l_m}.
	\]
	Hence, by the multinomial theorem,
	\[\begin{split}
		\int f_k\,d\theta^{*n}\tau_x^{-1}
		&= \sum_{l_1+\dotsb+l_m=n}\frac1{m^n} \binom{n}{l_1,l_2,\dotsc,l_m}
			e^{2\pi i k\cdot x} \prod_{j=1}^m e^{2\pi ia k_jl_j} \\
		&= f_k(x) \biggl( \sum_{j=1}^m \frac1{m} e^{2\pi iak_j} \biggr)^n
		 =: f_k(x) (\alpha_k)^n.
	\end{split}\]
	Note that $|\alpha_k|\leq1$ and $\alpha_k\neq1$ because $a$ is irrational. 
	Therefore,
	\[
		\int f_k\,d\mu_N^x = f_k(x) \frac1{N} \sum_{n=0}^{N-1} (\alpha_k)^n
		= f_k(x) \frac1{N} \frac{1-(\alpha_k)^N}{1-\alpha_k} \to 0,
	\]
	as required.
\end{proof}

We will use Lemma~\ref{jointfrac} to understand the fractional parts in the 
sandpile, working on the cube $[0,1/2d]^m$ for $m=|\Lambda|$ rather than on 
$[0,1]^m$. Since topplings have no effect on the fractional parts of the 
heights, for the fractional parts it suffices to study the additions, and a 
point $x \in [0,1/2d]^m$ corresponds to all fractional parts of the $m$~sites 
in the system.

\begin{proof}[Proof of Theorem \ref{irrationala}.]
We have from Theorem~\ref{uniform} that the uniform measure~$\mu$ 
on~$\mathcal{R}$ is ergodic. Suppose now that $\nu$ is another invariant 
measure. We will show that $\mu(B)=0$ implies $\nu(B)=0$, and according to 
Lemma~\ref{abscont} it then follows that $\nu=\mu$.

Consider the ``factor map''~$f$ defined via $f(\eta) = \tilde{\eta}$, that is, 
$f$ produces the fractional part when applied to a configuration. The map~$f$ 
commutes with the random transformations we apply to~$\eta$, in the sense 
that
\[ f(A_x(\eta)) =  \tilde{A}_x(f(\eta)), \]
where $\tilde{A}_x(\tilde{\eta})$ is the configuration of fractional parts 
that results upon adding~$a$ to the height at site~$x$. Write $\tilde{\nu} = 
\nu f^{-1}$ and $\tilde{\mu} = \mu f^{-1}$. Because $f$ commutes with the 
random transformations and $\nu$ is invariant, for any measurable 
subset~$\tilde{B}$ of $[0,1/2d]^{|\Lambda|}$ we have that
\[
	\nu(f^{-1}(\tilde{B}))
	= \frac1{|\Lambda|} \sum_{x\in\Lambda} \nu(A_x^{-1}f^{-1}(\tilde{B}))
	= \frac1{|\Lambda|} \sum_{x\in\Lambda} \nu(f^{-1}(\tilde{A}_x^{-1} 
	\tilde{B})).
\]
Therefore, we have 
\[
	\tilde{\nu}(\tilde{B}) = \frac1{|\Lambda|} \sum_{x\in\Lambda} 
	\tilde{\nu}(\tilde{A}_x^{-1} \tilde{B}),
\]
and hence $\tilde{\nu}$ is invariant.
We claim that $\tilde{\mu} = \tilde{\nu}$. To prove this, it suffices to show 
that $\tilde{\mu}(R) = \tilde{\nu}(R)$ for any rectangle~$R$. Since rectangles 
are $\tilde{\mu}$-continuity sets, by Lemma~\ref{jointfrac} and bounded 
convergence,
\[
	\tilde{\mu}(R) = \int \tilde{\mu}(R)\, d\tilde{\nu}
	= \int \lim_{N\to\infty} \int I_R \, d\mu_N^h \, d\tilde{\nu}(h)
	= \lim_{N\to\infty} \iint I_R \, d\mu_N^h \, d\tilde{\nu}(h).
\]
Here $\mu^h_N$ is the analogue of the measure defined in Lemma~\ref{jointfrac} 
on the space $[0,1/2d]^{|\Lambda|}$ rather than $[0,1]^m$; it corresponds to 
choosing $n \in \{0,1,\dotsc,N-1\}$ uniformly, and then applying~$n$ uniformly 
and independently chosen transformations to the fractional height 
configuration~$h$. Thus we also have that
\[\begin{split}
	\iint I_R \, d\mu_N^h \, d\tilde{\nu}(h)
	&= \int \frac1N \sum_{n=0}^{N-1} \frac1{|\Lambda|^n} 
	\sum_{y_1,\dotsc,y_n\in\Lambda} I_R(\tilde{A}_{y_1} \dotsm \tilde{A}_{y_n} 
	h) \, d\tilde{\nu}(h) \\
	&= \frac1N \sum_{n=0}^{N-1} \frac1{|\Lambda|^n} 
	\sum_{y_1,\dotsc,y_n\in\Lambda} \tilde{\nu}(\tilde{A}_{y_n}^{-1} \dotsm 
	\tilde{A}_{y_1}^{-1} R).
\end{split}\]
By invariance of~$\tilde{\nu}$, this last expression is equal to 
\[
    \frac1N \sum_{n=0}^{N-1} \tilde{\nu}(R)
	= \tilde{\nu}(R),
\]
and we conclude that $\tilde{\mu}(R) = \tilde{\nu}(R)$ for every rectangle~$R$, 
hence $\tilde{\mu}=\tilde{\nu}$.

For the final step, let $B$ be such that $\mu(B) = 0$ and write $\tilde{B} = 
f(B)$. We claim that then $\tilde{\mu}(\tilde{B}) = 0$. Indeed, the inverse 
image of~$\tilde{B}$ is a collection of points of the form $x+(2d)^{-1}\eta$, 
where $x \in B$ and $\eta \in \mathbb{Z}^\Lambda$. If $\mu(B)=0$, then also 
this collection of points has $\mu$-measure~0. Since $\tilde{\mu} = 
\tilde{\nu}$, it follows that $\tilde{\nu}(\tilde{B}) =0$, hence also 
$\nu(B)=0$. This concludes the proof.
\end{proof}

\paragraph{Acknowledgment.} It is a pleasure to thank Michael Keane for very 
interesting discussions.

\goodbreak

\end{document}